\newcommand{\kommentar}[1]{}
\newcommand{\ord}{\text{ord}}
\newcommand{\F}{\mathbb F}
\newcommand{\Z}{\mathbb Z}
\newcommand{\R}{\mathbb R}
\newcommand{\C}{\mathbb C}
\DeclareMathOperator{\Res}{Res}
\DeclareMathOperator{\re}{Re}
\renewcommand{\pmod}[1]{\,(\mathrm{mod}\,#1)}
\newtheorem{lem}{Lemma}[section]
\newtheorem{prop}[lem]{Proposition}
\newtheorem{thm}[lem]{Theorem}
\newtheorem{defn}[lem]{Definition}
\newtheorem{cor}[lem]{Corollary}
\theoremstyle{definition}
\newtheorem{rem}[lem]{Remark}
\title{On the Northcott property of zeta functions over function fields}
\author{Xavier Généreux, Matilde Lal\'in, Wanlin Li}
\date{}
\address{Xavier G\'en\'ereux:  D\'epartement de math\'ematiques et de statistique,
                                    Universit\'e de Montr\'eal.
                                    CP 6128, succ. Centre-ville.
                                     Montreal, QC H3C 3J7, Canada}\email{xavier.genereux@umontreal.ca}
\address{Matilde Lal\'in:  D\'epartement de math\'ematiques et de statistique,
                                    Universit\'e de Montr\'eal.
                                    CP 6128, succ. Centre-ville.
                                     Montreal, QC H3C 3J7, Canada}\email{mlalin@dms.umontreal.ca}
\address{Wanlin Li: Centre de recherches math\'ematiques,
                                    Universit\'e de Montr\'eal.
                                    CP 6128, succ. Centre-ville.
                                     Montreal, QC H3C 3J7, Canada}\email{liwanlin@crm.umontreal.ca}
\subjclass[2020]{Primary 11G40; Secondary 11M06, 14G10}
\keywords{zeta function over function fields; Northcott property}
\begin{document}

\begin{abstract}
Pazuki and Pengo defined a Northcott property for special values of zeta functions of number fields and certain motivic $L$-functions. We determine the values for which the Northcott property holds over function fields with constant field $\F_q$ outside the critical strip. We then use a case by case approach for some values inside the critical strip, notably $\re (s) < \frac{1}{2} - \frac{\log 2}{\log q}$ and for $s$ real such that $1/2 \leq s \leq 1$, and we obtain a partial result for complex $s$ in the case $1/2< \re(s)\leq  1$ using recent advances on the Shifted Moments Conjecture over function fields. 
\end{abstract}

\maketitle

\section{Introduction}
The Northcott property \cite{Northcott} implies that a set of algebraic numbers with bounded height and bounded degree must be finite. In \cite{PP}, Pazuki and Pengo study a variant of the Northcott property for number fields using special values of the Dedekind zeta function to measure the height.  For a field $K$ and $s\in \C$ denote 
\[\zeta_K^*(s):=\lim_{t\rightarrow s}\frac{\zeta_K(t)}{(t-s)^{\ord_{s}(\zeta_K(t))}},\]
the first nonzero coefficient of the Taylor series for $\zeta_K(s)$ around $s$. 

For a fixed $s=n\in \Z$, Pazuki and Pengo  consider, for $B$ a fixed positive real number, the set of isomorphism classes of number fields \begin{equation}\label{eq:PPset} \{[K]: |\zeta_K^*(n)|\leq B\},\end{equation}
and discuss the finiteness of this set under various conditions of $B$ and $n$. For number fields, they prove that a Northcott property holds for $n$ located at the left of the critical strip, but does not hold for $n$ to the right of the critical strip, and they show that such a property does not hold for $n=1$, but holds for $n=0$. They also estimate the size of this set when the Northcott property holds.

We are interested in exploring the Northcott property for global function fields, more precisely, we consider the set of isomorphism classes of function fields $K$ with constant field $\F_q$, and we aim at considering the value of its zeta function at any complex number $s \in \C$. To this end, we define 
\[S_{q,s,B} = \{ [K] : |\zeta_K^*(s)| \leq B \},\]
where $[K]$ denotes the isomorphism class of $K$, a global function field in one variable over a finite constant field $\mathbb{F}_q$ with $q$ elements, where $q$ is fixed.
The Northcott property of $\F_q$ at  $s$  is equivalent to having $S_{q,s,B}$ finite for all $B\in \mathbb{R}_{>0}$. More generally, we consider the following definition.

\begin{figure}
    \centering
    \begin{tikzpicture}[scale=2]
        \filldraw [blue!20] (1/3,-2) -- (1/3,2) -- (-2,2) -- (-2,-2);
        \filldraw [red!25] (1,-2) -- (1,2) -- (2,2) -- (2,-2);
        \filldraw [red!25] (1/2,-2) -- (1/2,2) -- (1,2) -- (1,-2);
        \draw [dashed] (1,-2) -- (1,2);
        \draw (-2,0) -- (2,0);
        \draw (0,-2) -- (0,2);
        \draw [red!60, ultra thick] (1/2,0) -- (1,0);
        \filldraw [red] (1,0) circle (1pt);
        \filldraw [red] (1/2,0) circle (1pt);
        \draw [dashed] (1/2,-2) -- (1/2,2);
        \node at (1/2,-0.2) {$1/2$};
    \end{tikzpicture}
    \caption{For the base field $\mathbb{F}_{q}$, the Northcott property holds in the area in blue. Bright red indicates that $S_{q,s,B}$ is infinite for all $B$, while light red means that $S_{q,s,B}$ is infinite for $B$ greater than a certain constant. Remark that the white gap corresponding to $\frac{1}{2}-\frac{\log 2}{\log q}\leq \sigma <\frac{1}{2}$ disappears as $q \rightarrow \infty$.}
    \label{fig:covering}
\end{figure}
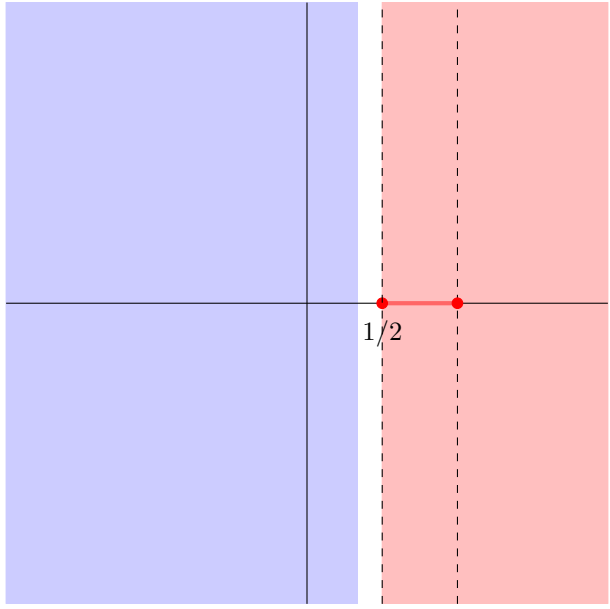

\begin{defn} Let $q$ be a power of a prime, $s \in \C$,  and $\mathcal{I}\subset \R_{\geq 0}$.  We say that the triple $(q,s,\mathcal{I})$ has the Northcott property if the set $S_{q,s,B}$ is finite for all $B \in \mathcal{I}$. We say that $(q,s,\mathcal{I})$ is non-Northcott if the set $S_{q,s,B}$ is infinite for all $B \in \mathcal{I}$.
\end{defn}

We prove the following results. 
\begin{thm}\label{thm:main} Let $\sigma=\re(s)$. 
\begin{enumerate}[$a)$]
\item If $q>4$, then $(q,0,\R_{>0})$ satisfies the Northcott property.

    \item When $\sigma < \frac{1}{2} - \frac{\log 2}{\log q}$, then $(q,s,\R_{>0})$ satisfies the Northcott property.

    \item Let $\sigma>1$ and 
    \[B=\frac{1}{(1-q^{-\sigma})(1-q^{1-\sigma})^2}.\]
    Then $(q,s,\R_{\geq B})$ is non-Northcott.

    \item For $q \equiv 1 \pmod{4}$, $(q,1,\R_{>0})$ is non-Northcott.

    \item  For $q \equiv 1 \pmod{4}$ and  $1/2<\sigma < 1$, $(q,\sigma,\R_{>0})$ is non-Northcott.  
    
    \item $(q,1/2,\R_{>0})$ is non-Northcott  by working directly with $\zeta_K(1/2)$ (as opposed to $\zeta_K^*(1/2)$).
    
    \item  For $q \equiv 1 \pmod{4}$ and  $1/2<\sigma$ (but $s\not = 1$), let 
    \begin{align*}
    B = &\left|\frac{1}{\left(1-q^{-s}\right)\left(1-q^{1-s}\right) }\right|\\&\times  \prod_{\substack{P\, \text{monic}\\ \text{irreducible}}} \left[\frac{1}{2}\left( \left(1-\frac{1}{|P|^{s-\frac{1}{2}}}\right)^{-1}\left(1-\frac{1}{|P|^{\overline{s}-\frac{1}{2}}}\right)^{-1}+\left(1+\frac{1}{|P|^{s-\frac{1}{2}}}\right)^{-1}\left(1+\frac{1}{|P|^{\overline{s}-\frac{1}{2}}}\right)^{-1}\right)+\frac{1}{|P|}\right]^{1/2}\\&\times \left(1+\frac{1}{|P|}\right)^{-1/2}.
    \end{align*}
Then $(q, s,\R_{>B})$ is non-Northcott. 
\end{enumerate}
\end{thm}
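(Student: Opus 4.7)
The plan is to use the family of quadratic function fields $K_D = \F_q(T)(\sqrt{D})$ indexed by squarefree monic $D \in \F_q[T]$. Since $s \neq 1$, we have $\zeta_{K_D}^*(s) = \zeta_{K_D}(s)$, and the standard factorization
\[
\zeta_{K_D}(s) = \zeta_{\F_q(T)}(s)\,L(s,\chi_D) = \frac{L(s,\chi_D)}{(1-q^{-s})(1-q^{1-s})},
\]
with $\chi_D$ the quadratic character attached to $D$, reduces the problem to exhibiting infinitely many $D$ with $|L(s,\chi_D)|$ small. The assumption $q \equiv 1 \pmod 4$ places us in the setting of even quadratic characters and makes $\chi_D$ real, so that $|L(s,\chi_D)|^2 = L(s,\chi_D)L(\bar s,\chi_D)$---precisely a shifted second moment in the sense of the function-field Shifted Moments Conjecture cited in the abstract.

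The key input is this shifted second moment, averaged over squarefree monic $D$ of degree $d$: for $\re(s) > 1/2$,
\[
\lim_{d\to\infty}\; \mathbb{E}_{\substack{D \text{ sq.-free, monic}\\ \deg D = d}}\!\bigl[L(s,\chi_D)L(\bar s,\chi_D)\bigr] \;=\; C(s),
\]
where $C(s)$ is the convergent Euler product appearing inside the square root in the definition of $B$. In each Euler factor, the prefactor $(1+|P|^{-1})^{-1}$ is the local density of squarefree $D$, and $\tfrac12[\cdots] + 1/|P|$ is the conditional expectation of the squared local $L$-factor against the limiting distribution of $\chi_D(P) \in \{0,\pm 1\}$ (namely $\Pr[\chi_D(P)=0] = 1/(|P|+1)$ and $\Pr[\chi_D(P)=\pm 1] = |P|/(2(|P|+1))$), after the appropriate analytic normalization used by the cited shifted-moment result. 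Convergence of the product is ensured by $\sigma > 1/2$.

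A pigeonhole argument then yields, for every $\varepsilon > 0$, infinitely many squarefree monic $D$ with $|L(s,\chi_D)|^2 \leq C(s) + \varepsilon$: otherwise, for all sufficiently large $d$ every $D$ of that degree would contribute a value strictly exceeding $C(s) + \varepsilon$, forcing the mean to exceed $C(s) + \varepsilon$, contradicting the limit. For each such $D$,
\[
|\zeta_{K_D}^*(s)| \;\leq\; |\zeta_{\F_q(T)}(s)|\,\sqrt{C(s)+\varepsilon},
\]
which can be made arbitrarily close to $B$. Distinct squarefree monic $D$ yield non-isomorphic $K_D$, so this produces infinitely many $[K_D] \in S_{q,s,B'}$ for every $B' > B$, as required.

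The principal obstacle is securing the shifted second-moment asymptotic in the exact form needed: a genuine limit (not merely an upper bound), uniform for all $s$ with $\re(s) > 1/2$, $s \neq 1$, with leading constant $C(s)$ equal to the product defining $B$, and with correct handling of the ramified primes $P \mid D$. A secondary technical point is to align the analytic normalization of $L(s,\chi_D)$ in the cited moment computation with the arithmetic normalization arising from our zeta factorization, so that the specific factor $|P|^{-(s-1/2)}$ in the local factors of $C(s)$ emerges with the correct shift.
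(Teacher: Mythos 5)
Your proposal addresses only part $g)$ of the seven-part theorem. Parts $a)$ and $b)$ rest on the Weil bound $(\sqrt{q}|u|-1)^{2g}\leq |L_K(u)|$ forcing the genus to be bounded, together with the finiteness of genus-$g$ curves over $\F_q$; part $c)$ needs an upper bound for $|L(\sigma,\chi_D)|$ uniform over all quadratic $D$; parts $d)$ and $e)$ invoke Lumley's results on the distribution of small values of $L(1,\chi_D)$ and $L(\sigma,\chi_P)$; and part $f)$ uses Li's theorem producing infinitely many $D$ with $L(\frac12,\chi_D)=0$. None of this appears in your write-up, so as a proof of the stated theorem it is incomplete on its face. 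I will therefore assess what you wrote as a proof of part $g)$ only.

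For part $g)$ in the range $\frac12<\sigma<1$, your argument is essentially the paper's: take $\alpha_1=s-\frac12$, $\alpha_2=\overline{s}-\frac12$ in the Bui--Florea--Keating shifted second moment, observe that the $\mathfrak{A}=\emptyset$ term dominates and that the $\zeta_q(1+\gamma_i+\gamma_j)$ factors cancel against the first factors of $\mathcal{A}_C(1)$ to leave exactly the Euler product $C(s)$ inside your square root, then pigeonhole to find, for each large $g$, some $D\in\mathcal{H}_{2g+1}$ with $|L(s,\chi_D)|^2$ at most the average. That part is fine. The genuine gap is precisely the ``principal obstacle'' you name and then leave unresolved: the cited moment theorem requires $|\re(\alpha_j)|<\frac12$, i.e.\ it covers only $\frac12<\sigma<1$, whereas part $g)$ asserts the conclusion for all $\sigma>\frac12$ with $s\neq 1$. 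For $\sigma\geq 1$ no off-the-shelf asymptotic exists, and the paper has to prove one (its Theorem on the average at $\re(\alpha)\geq\frac12$): it expands $|L(\frac12+\alpha,\chi_D)|^2$ by the approximate functional equation into two Dirichlet sums of length about $2g$, isolates the square $f=\square$ contribution via the average of $\chi_D(h^2)$, extracts the Euler product by Perron's formula, and bounds the non-square contribution by the Faifman--Rudnick character-sum estimate, arriving at an error $O(q^{(\varepsilon-\re(\alpha))g}+4^gq^{(\varepsilon-2\re(\alpha))g})$ which is $o(1)$ only because $q\equiv 1\pmod 4$ forces $q>4$. Without supplying this computation (or an equivalent), your argument does not establish part $g)$ for $\sigma\geq 1$; identifying a difficulty is not the same as overcoming it. Two smaller points: your probabilistic description of the local factors is a heuristic, not a derivation, though it is harmless since the constant is ultimately supplied by the moment theorem; and ``distinct $D$ yield non-isomorphic $K_D$'' is not literally true, but infinitude of isomorphism classes does follow because your $D$'s have unbounded degree, hence unbounded genus.
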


In the list of Theorem \ref{thm:main}, statements $d)$, $e)$, and $f)$ depend on deep results on the distribution of quadratic Dirichlet $L$-functions due to Lumley \cites{Lumley,Lumley2} and Li \cite{Li}, while $g)$  for $\sigma<1$ depends on the Shifted Moments Conjecture for quadratic Dirichlet $L$-functions, which was formulated by Andrade and Keating \cite{Andrade-Keating-conj} for the function field case, and has been recently proven under certain constraints by Bui, Florea, and Keating \cite{Bui-Florea-Keating}. The result of $g)$ for $\sigma\geq 1$ follows directly from a moment computation and improves upon the set given in $c)$. The result of $f)$ is for $\zeta_K(1/2)$ as it rests on the existence of infinitely many $[K]$ such that $\zeta_K(1/2)=0$, instead of working with $\zeta_K^*(1/2)$.

In addition, we discuss bounds for $\# S_{q,s,B}$ in the cases of $a)$ and $b)$. More precisely, we use a result of Couveignes \cite{Couveignes} to prove that there is an absolute computable constant $\mathcal{Q}$ (independent of $q$) such that 
\begin{equation}\label{eq:Sbound}
\# S_{q,s,B}\leq q^{\mathcal{Q} c_{\sigma}(\log B)^3B},
\end{equation}
where 
\[c_\sigma=\frac{1}{(\log q)\log \left(q^{\frac{1}{2}-\sigma}-1\right)}.\]

Pazuki and Pengo \cite{PP} study the Northcott, Bogomolov, and Lehmer properties for special values of $L$-functions evaluated at $n\in \Z$. They prove that the Northcott property holds at the left of the critical strip for a general family of motivic $L$-functions (assuming meromorphic continuation and functional equation), which can be compared to our result of Theorem \ref{thm:main} $b)$. They also focus on the Northcott property for Dedekind zeta functions of number fields evaluated at integer numbers and prove that it is not satisfied for $n\in \Z_{\geq 1}$, in a result analogous to our  Theorem \ref{thm:main} $c)$. When $n\in \Z_{\leq 0}$, they obtain bounds for the size of the set given in  \eqref{eq:PPset} which are better than \eqref{eq:Sbound} in the case of $n<0$, but worse than  \eqref{eq:Sbound} in the case of $n=0$. Our results are limited by the lack of understanding on the number of smooth, projective curves of genus $g$ over a fixed finite field.

This article is organized as follows. Section \ref{sec:background} covers standard background on the zeta function attached to a global function field. Sections \ref{sec:left} and \ref{sec:right} treat the left and right sides of the critical strip, while Section \ref{sec:critical} considers the critical strip.

\section*{Acknowledgements} The authors are grateful to Jordan Ellenberg, Alexandra Florea, and Allysa Lumley for many useful discussions.  This work was supported by the
Centre de recherches math\'ematiques and the Institut des sciences math\'ematiques (CRM-ISM postdoctoral fellowship to WL, ISM summer internship to XG),  the Natural Sciences and Engineering Research Council of Canada (Discovery Grant \texttt{355412-2013} to ML), and the Fonds de recherche du Qu\'ebec - Nature et technologies (Projet de recherche en \'equipe  \texttt{256442} and \texttt{300951} to ML).

\section{Some background on $\zeta_K(s)$} \label{sec:background}
In this section we recall some background on function fields with constant field $\F_q$. More details can be found in \cite{Rosen}.

Let $K$ be a global function field in one variable with a finite constant field $\F_q$ with $q$ elements. A prime of $K$ is a discrete valuation ring $R$ with maximal ideal $P$ such that $\F_q[T] \subset R$ and the quotient field of $R$ equals $K$. The degree of a prime $P$, denoted by $\deg(P)$ is the (finite) dimension of $R/P$ over $\F_q$.  The group of divisors of $K$ is the free abelian group generated by the primes. Thus, a divisor is an element of the form $A=\sum_P a(P)P$. In this case, $\sum_Pa(P)\deg(P)$ is called the degree of $A$, denoted $\deg(A)$. The norm of $A$ is equal to $q^{\deg(A)}$ and is denoted by $|A|$. The divisor $A$ is said to be effective if $a(P)\geq 0$ for all $P$. We write $A\geq 0$ to indicate that $A$ is effective. 

The zeta function of $K$ is defined for $\re(s)>1$ by 
\[\zeta_K(s):=\sum_{A\geq 0} \frac{1}{|A|^s}=\sum_{n=0}^\infty \frac{b_n}{q^{ns}},\]
where the sum is taken over all the effective divisors, and $b_n$ is the number of effective divisors of degree $n$.  Notice that $\zeta_K(s)$ satisfies an Euler product
\[\zeta_K(s)=\prod_{P}\left(1-\frac{1}{|P|^s}\right)^{-1}.\]

From this, we have
\begin{equation}\label{eq:logzeta}
\log \zeta_K(s) =\sum_{P}\sum_{j=1}^\infty \frac{1}{j|P|^{js}}=\sum_{A\geq 0} \frac{\Lambda(A)}{\deg(A)|A|^s},
\end{equation}
where $\Lambda(A)$ is the von Mangoldt function, equal to $\deg(P)$ if $A=P^j$ (or $A=jP$ if written additively) for $P$ prime and $0$ otherwise.

By the Weil conjectures (\cite{Rosen}*{Theorem 5.9}), there is a polynomial $L_K(u) \in \Z[u]$ of degree $2g$, where $g$ is the genus of the curve whose function field is $K$, such that 
\begin{equation}\label{eq:zetadefn}\zeta_K(s)=\frac{L_K(q^{-s})}{(1-q^{-s})(1-q^{1-s})}.\end{equation}
The right hand side provides a meromorphic continuation for $s \in \C$. We immediately see that  $\zeta_K(s)$ has simple poles at $s=0,1$. If we set 
\[\xi_K(s)=q^{(g-1)s}\zeta_K(s),\]
then we have the functional equation
\begin{equation}\label{functeq}
\xi_K(1-s)=\xi_K(s).
\end{equation}

The Riemann Hypothesis, which is known to be true in this context, implies that the zeros of $\zeta_K(s)$ occur only at $\re(s)=1/2$. 

The function $\zeta_K(s)$ admits certain symmetry inherited from the functional equation. This symmetry centers on the critical line $\re(s)= 1/2$. It is natural to analyze the behavior of $|\zeta_K(s)|$ depending on the position of $s$ respect to the critical strip. 

After making the change of variables $u=q^{-s}$, we can write 
\begin{equation}\label{eq:L-factor}
L_K(u)=\prod_{j=1}^{2g} (1-\pi_j u),
\end{equation}
where $|\pi_j|=\sqrt{q}$. In addition, it is known that $L_K(0)=1$ and $L_K(1)=h_K$, the class number of $K$.
By the functional equation, the $\pi_j$'s can be separated in pairs of complex conjugates, so that $\pi_j=\overline{\pi_{2g-j}}$. Notice that we also have (\cite{Rosen}*{Theorem 5.12})
\begin{equation}\label{eq:sumofpi}\sum_{j=1}^{2g} \pi_j^\ell =q^\ell+1-\sum_{d\mid \ell}da_d,\end{equation}
where
\begin{equation}\label{eq:a_ddefn}
a_d=\#\{P:\deg(P)=d\}.
\end{equation}
Thus, we can write the Euler product for $\zeta_K(s)$ as 
\begin{equation}\label{eq:Eulera}
\zeta_K(s)=\prod_{n=1}^\infty \left(1-\frac{1}{q^{ns}}\right)^{-a_n}.
\end{equation}

In particular for $K=\F_q(T)$ we have 
\[\zeta_{\F_q(T)}(s)=\prod_P\left(1-\frac{1}{|P|}\right)^{-1},\]
where the product is over all the primes of $\F_q(T)$, namely, the monic irreducible polynomials and the prime at infinity. When $P$ is a monic irreducible polynomial, we have that $|P|=q^{\deg(P)}$, while the prime at infinity has norm $q$, since its degree is 1. Notice that 
\[\zeta_{\F_q(T)}(s)=\frac{1}{(1-q^{-s})(1-q^{1-s})}.\]
We will also denote by $\zeta_{\F_q[T]}$ the zeta function without the prime at infinity. In this case 
\[\zeta_{\F_q[T]}(s)=\frac{1}{1-q^{1-s}}.\]

Throughout this paper we will write $s=\sigma+i\tau$, where $\sigma, \tau$ are real numbers.

\section{The left side of the critical strip}\label{sec:left}

 Starting with the left side of the critical strip, we obtain a positive result for a large subset of $\mathbb{C}_{\sigma<1/2}$ that contains $\mathbb{C}_{\sigma\leq 0}$ (for $q>4$). More precisely, we prove in Theorems \ref{thm:s=0} and \ref{thm:s<0} that for $s$ in  the blue area of Figure \ref{fig:covering}, $(q,s,\R_{>0})$ has the Northcott property.
 
\begin{lem}\label{lem:Hassesimilar}
The polynomial $L_K(u) \in \mathbb{Z}[u]$ satisfies  the following bounds
\begin{align}\label{eq:doubleHasse}
\left( \sqrt{q}|u|-1\right)^{2g}\leq |L_K(u)| \leq \left( \sqrt{q}|u|+1\right)^{2g}.\end{align}
\end{lem}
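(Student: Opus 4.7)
The proof is a direct factor-by-factor estimate on the product representation of $L_K(u)$, so I expect no real obstacle. The plan is to start from the factorization
\[
L_K(u) = \prod_{j=1}^{2g}(1-\pi_j u)
\]
recalled in \eqref{eq:L-factor}, together with the Riemann Hypothesis input $|\pi_j|=\sqrt{q}$. Taking absolute values turns this into
\[
|L_K(u)| = \prod_{j=1}^{2g}|1-\pi_j u|,
\]
so it suffices to sandwich each factor between $\sqrt{q}|u|-1$ (in absolute value) and $\sqrt{q}|u|+1$.

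For the upper bound I would apply the ordinary triangle inequality to each factor:
\[
|1-\pi_j u| \leq 1 + |\pi_j|\,|u| = 1 + \sqrt{q}\,|u|,
\]
and then take the product over $j=1,\dots,2g$.

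For the lower bound I would apply the reverse triangle inequality to each factor:
\[
|1-\pi_j u| \geq \bigl|\,|\pi_j u|-1\,\bigr| = \bigl|\sqrt{q}\,|u|-1\bigr|.
\]
Taking the product of $2g$ such inequalities gives $|L_K(u)| \geq \bigl|\sqrt{q}|u|-1\bigr|^{2g}$. Finally, since $2g$ is an even integer, one has $\bigl|\sqrt{q}|u|-1\bigr|^{2g} = (\sqrt{q}|u|-1)^{2g}$ regardless of whether $\sqrt{q}|u|$ is larger or smaller than $1$, which yields exactly the stated form of the lower bound in \eqref{eq:doubleHasse}. The only mildly delicate point, and the reason the estimate is stated this way, is precisely this use of the parity of $2g$ to absorb the absolute value; everything else is immediate from the Weil conjectures and elementary inequalities.
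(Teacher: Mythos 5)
Your proof is correct and is essentially the paper's argument: the paper also deduces the bounds directly from the factorization \eqref{eq:L-factor} together with the triangle inequality applied to each factor $|1-\pi_j u|$. Your remark that the evenness of the exponent $2g$ absorbs the absolute value in the lower bound (so that $\bigl|\sqrt{q}|u|-1\bigr|^{2g}=(\sqrt{q}|u|-1)^{2g}$ even when $\sqrt{q}|u|<1$) is a correct filling-in of a detail the paper leaves implicit.
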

\begin{proof}
This follows immediately from equation \eqref{eq:L-factor} and the triangle inequality on each factor. 
\end{proof}

\begin{lem}\label{lem:finitegenus} Let $q$ be a power of a prime $p$.  For a fixed $g$ there are finitely many isomorphism classes of global function fields over $\F_q$ with genus $g$. 
\end{lem}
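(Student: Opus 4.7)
Isomorphism classes of global function fields $K$ with constant field $\F_q$ are in bijection with isomorphism classes of smooth, projective, geometrically integral curves $C/\F_q$, and the genus is preserved under this correspondence. So the lemma reduces to the classical statement: for each fixed $g \geq 0$ there are only finitely many isomorphism classes of such curves over $\F_q$.

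I would treat the case $g \geq 2$ with the tricanonical embedding. For $C/\F_q$ of genus $g\geq 2$, the linear system $|3K_C|$ defines a closed immersion $\iota_C\colon C \hookrightarrow \PP^{5g-6}_{\F_q}$, whose image has Hilbert polynomial $P(t) = (6g-6)t + (1-g)$, depending only on $g$. Since the Hilbert scheme $\mathrm{Hilb}^{P}(\PP^{5g-6}_{\F_q})$ is a projective scheme of finite type over $\F_q$, it has only finitely many $\F_q$-points. Two tricanonically embedded curves give the same abstract isomorphism class iff they lie in the same orbit under $\mathrm{PGL}_{5g-5}(\F_q)$, a finite group acting on a finite set; so there are only finitely many isomorphism classes in genus $g\geq 2$.

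For $g=0$: any smooth projective geometrically integral curve of genus $0$ over $\F_q$ has $q+1$ rational points by the Weil bounds applied to \eqref{eq:L-factor} (with $L_K = 1$), so it is isomorphic to $\PP^1_{\F_q}$, giving a unique class. For $g=1$: again by Weil, $C(\F_q)$ is nonempty, so fixing $P_0\in C(\F_q)$ the linear system $|3P_0|$ embeds $C$ as a plane cubic in $\PP^2_{\F_q}$, and a Hilbert-scheme or elementary counting argument bounds the number of such cubics; dividing by the finite group $\mathrm{PGL}_3(\F_q)$ gives finitely many classes.

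An alternative, even shorter route is to cite that the moduli stack $\mathcal{M}_g$ is of finite type over $\mathrm{Spec}\,\Z$, so $\mathcal{M}_g(\F_q)$ is a finite groupoid, and then use that each geometric isomorphism class has only finitely many $\F_q$-twists (since the automorphism group of $C_{\overline{\F_q}}$ is finite for $g\geq 2$, and for $g\leq 1$ twists over a finite field are classified by a finite set). No serious obstacle is expected; the main bookkeeping is the small-genus cases, and the heavy lifting is done by the standard fact that Hilbert schemes with fixed Hilbert polynomial are of finite type.
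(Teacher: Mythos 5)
Your proof is correct, and its closing alternative is exactly the paper's own argument: the paper proves this lemma in one line by invoking the existence of the moduli stack $\mathcal{M}_g$ of smooth proper genus-$g$ curves over $\F_p$ (citing de Jong--Katz), with no further detail. Your main route via the tricanonical embedding and the finiteness of $\F_q$-points of the Hilbert scheme is the standard proof of that black box, and it has the additional merit of being self-contained and of explicitly treating $g\le 1$, which the paper's citation (stated only for $g\ge 2$) leaves implicit.
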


\begin{proof}
The statement follows from the fact that there exists a moduli stack $\mathcal{M}_g$ over $\mathbb{F}_p$ classifying smooth proper curves of genus $g \ge 2$. (See for example \cite{DeJongKatz}.)
\end{proof}

We are ready to prove the main result of this section. First we treat the case $s=0$ separately. 
\begin{thm}\label{thm:s=0} Let $q$ be a power of a prime such that $q>4$. 
We have that $(q,0,\R_{>0})$ satisfies the Northcott property. 
\end{thm}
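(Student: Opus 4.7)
The plan is to reduce the question at $s=0$ to a growth estimate for the class number $h_K$, and then to bound the genus $g$ using the lower Hasse-type bound from Lemma \ref{lem:Hassesimilar}.

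First, I would compute $\zeta_K^*(0)$ explicitly. Since $\zeta_K(s)$ has a simple pole at $s=0$ (coming from the factor $1-q^{-s}$ in the denominator of \eqref{eq:zetadefn}), $\zeta_K^*(0)$ is just the residue of $\zeta_K$ at $0$. Using $L_K(1)=h_K$, $(1-q^{1-s})|_{s=0}=1-q$, and $\frac{d}{ds}(1-q^{-s})|_{s=0}=\log q$, I obtain
\[
\zeta_K^*(0) \;=\; \lim_{s\to 0}\, s\,\zeta_K(s) \;=\; -\,\frac{h_K}{(q-1)\log q}.
\]
So controlling $|\zeta_K^*(0)|$ is equivalent to controlling $h_K$.

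Next, I would apply Lemma \ref{lem:Hassesimilar} at $u=1$ (which is allowed since $L_K(1)=h_K$), obtaining the lower bound
\[
h_K \;\geq\; (\sqrt{q}-1)^{2g}.
\]
The hypothesis $q>4$ enters precisely here: it guarantees $\sqrt{q}-1>1$, so the right-hand side grows exponentially in $g$. Combining this with the previous paragraph, if $[K]\in S_{q,0,B}$ then
\[
(\sqrt{q}-1)^{2g} \;\leq\; h_K \;\leq\; B\,(q-1)\log q,
\]
which yields an explicit upper bound
\[
g \;\leq\; \frac{\log\bigl(B(q-1)\log q\bigr)}{2\log(\sqrt{q}-1)}.
\]

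Finally, Lemma \ref{lem:finitegenus} says that for each fixed $g$ there are only finitely many isomorphism classes of global function fields over $\F_q$ of genus $g$; since $g$ is bounded, $S_{q,0,B}$ is a finite union of finite sets, hence finite. There is essentially no obstacle, since both key tools (the formula for $\zeta_K^*(0)$ and the Weil-type lower bound on $h_K$) are immediate from the material already collected in Section \ref{sec:background}; the only subtle point is that the argument strictly requires $q>4$ so that the lower bound on $h_K$ actually grows with $g$, which is why the cases $q\in\{2,3,4\}$ must be excluded from this statement.
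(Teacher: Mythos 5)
Your proposal is correct and follows essentially the same route as the paper: compute $\zeta_K^*(0)=\pm h_K/((q-1)\log q)$ as the residue at the simple pole, apply Lemma \ref{lem:Hassesimilar} at $u=1$ to get $h_K\geq(\sqrt q-1)^{2g}$ (where $q>4$ ensures exponential growth in $g$), bound the genus, and conclude with Lemma \ref{lem:finitegenus}. The only difference is cosmetic: you make the upper bound on $g$ explicit here, which the paper defers to Theorem \ref{thm:bound}.
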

\begin{proof}
We remark that 
\[\zeta_K^*(0)=\lim_{s\rightarrow 0} \frac{sL_K(q^{-s})}{(1-q^{-s})(1-q^{1-s})}=\frac{h_K}{1-q}\lim_{s\rightarrow 0}\frac{s}{1-q^{-s}}=\frac{h_K}{(1-q)\log q}.\]

By Lemma \ref{lem:Hassesimilar}, we have that 
\begin{align}\label{eq:Hasse}\frac{(\sqrt{q}-1)^{2g}}{(q-1)\log q} \leq |\zeta_K^*(0)|.\end{align}
Thus, we conclude that $|\zeta_K^*(0)| \to \infty$ as long as $g \rightarrow \infty$. Therefore, if $[K]\in S_{q,0,B}$, we must have that $g(K)$ is bounded.  By Lemma \ref{lem:finitegenus}, there are only finitely many $[K]$ for each $g$, and we conclude that $S_{q,0,B}$ must be finite. 
\end{proof}

\begin{thm}\label{thm:s<0}
Let  $s=\sigma+i \tau\in \C^*$ such that 
\[\sigma<1/2- \frac{\log 2}{\log q},\]
then $(q,s,\R_{>0})$ satisfies the Northcott property. 
\end{thm}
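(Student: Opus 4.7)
The plan is to mirror the strategy of Theorem \ref{thm:s=0}: show that the hypothesis on $\sigma$ forces $|\zeta_K^*(s)| \to \infty$ as $g \to \infty$, and then invoke Lemma \ref{lem:finitegenus} to conclude that $S_{q,s,B}$ is finite.

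The starting point is the formula \eqref{eq:zetadefn} together with the lower bound of Lemma \ref{lem:Hassesimilar} applied to $u = q^{-s}$, which gives
\[
|L_K(q^{-s})| \geq \left(q^{\frac{1}{2}-\sigma}-1\right)^{2g}.
\]
The hypothesis $\sigma < \frac{1}{2} - \frac{\log 2}{\log q}$ is precisely equivalent to $q^{1/2-\sigma} > 2$, so the base $q^{1/2-\sigma}-1$ is strictly greater than $1$ and the bound blows up with $g$.

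Next I would divide the analysis into two cases depending on whether $\zeta_K(s)$ is holomorphic at $s$. For $s$ not of the form $2\pi i k/\log q$ with $k \in \Z \setminus \{0\}$ (and noting that $s \neq 1$ is automatic since $\sigma < 1/2$, while $s \neq 0$ is in the hypothesis), both factors of the denominator $(1-q^{-s})(1-q^{1-s})$ are nonzero constants independent of $K$, so
\[
|\zeta_K^*(s)| = |\zeta_K(s)| \geq \frac{\left(q^{\frac{1}{2}-\sigma}-1\right)^{2g}}{\left|(1-q^{-s})(1-q^{1-s})\right|},
\]
using the Riemann hypothesis for function fields to guarantee $\zeta_K(s) \neq 0$ off the critical line. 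The right-hand side diverges as $g \to \infty$.

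The only subtle case is $s = 2\pi i k/\log q$ with $k \neq 0$, which can occur in our range only when $q > 4$ (since such $s$ has $\sigma = 0$ and the hypothesis then forces $0 < \frac{1}{2} - \frac{\log 2}{\log q}$). Here $\zeta_K$ has a simple pole and
\[
\zeta_K^*(s) = \lim_{t \to s}(t-s)\zeta_K(t) = \frac{L_K(1)}{(\log q)(1-q^{1-s})} = \frac{h_K}{(\log q)(1-q^{1-s})},
\]
and since $h_K = L_K(1) \geq (\sqrt{q}-1)^{2g}$ with $\sqrt{q}-1 > 1$ when $q > 4$, this also diverges with $g$. In either case, the condition $|\zeta_K^*(s)| \leq B$ forces $g$ to be bounded by a constant depending only on $q$, $s$, and $B$, and Lemma \ref{lem:finitegenus} gives finiteness of $S_{q,s,B}$. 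The main (mild) obstacle is bookkeeping the exceptional imaginary points where $\zeta_K$ has a pole; the core inequality itself is an immediate consequence of Lemma \ref{lem:Hassesimilar} once the hypothesis is rephrased as $q^{1/2-\sigma}>2$.
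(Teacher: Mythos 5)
Your proposal is correct and follows essentially the same route as the paper: the lower bound $|L_K(q^{-s})|\geq(q^{1/2-\sigma}-1)^{2g}$ from Lemma \ref{lem:Hassesimilar}, the observation that the hypothesis is exactly $q^{1/2-\sigma}>2$, and Lemma \ref{lem:finitegenus} to finish. In fact you are slightly more careful than the paper, which asserts that the denominator $(1-q^{-s})(1-q^{1-s})$ is nonzero for all $s\neq 0$ and thereby glosses over the pole points $s=2\pi i k/\log q$, $k\neq 0$ (which do lie in the admissible region when $q>4$); your explicit treatment of $\zeta_K^*$ at those points, giving $h_K/((\log q)(1-q))$ with $h_K\geq(\sqrt{q}-1)^{2g}$, correctly closes that small gap.
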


\begin{proof} First notice that if 
$\sigma<1/2- \frac{\log 2}{\log q}$, then we have that $2 < q^{\frac{1}{2}-\sigma}$. From here, we deduce that 
\[\sqrt{q}|u|-1>1.\]
By Lemma \ref{lem:Hassesimilar}, we conclude that 
 $|L_K(u)|\rightarrow \infty$ as long as $g \rightarrow \infty$. We reach the same conclusion as long as $s\not = 0$, since the denominator in $\zeta_K(s)$ is non-zero and bounding $|\zeta_K(s)|$ is therefore equivalent to bounding $|L_K(u)|$.  Thus, if $[K]\in S_{q,s,B}$, we must have that $g(K)$ is bounded.  By Lemma \ref{lem:finitegenus}, there are only finitely many $[K]$ for each $g$, and we conclude that $S_{q,s,B}$ must be finite.

\end{proof}

A natural question is to bound the size of $S_{q,s,B}$ in the cases when it is finite. We will need the following result of Couveignes \cite{Couveignes}.

\begin{thm}\cite{Couveignes}*{Theorem 2 (simplified version)} \label{thm:Couveignes} There exists an absolute and computable constant $\mathcal{Q}$ such that the following is true. Let $K=\F_q(T)(\mathcal{C})=\F_q(T,X)$ be a function field of genus $g\geq 2$ and degree $[\F_q(T,X):\F_q(T)]=n$. Then $K$ is determined by at most 
\[\mathcal{Q}(\log n)^2 (g+n(1+\log_q n))\]
 parameters of $\F_q$. 
 \end{thm}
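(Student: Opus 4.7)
\emph{Proof proposal.} Since this is quoted as a simplified form of Couveignes' Theorem 2, I would follow the strategy of the original paper: the idea is to encode a degree-$n$ extension $K/\F_q(T)$ of genus $g$ by the coefficients of a sufficiently ``small'' defining polynomial, and then count how many elements of $\F_q$ are needed to specify these coefficients.

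First I would fix a primitive element: by the primitive element theorem (available here since $\F_q(T)$ has many separating elements for a separable extension, and inseparable issues can be dealt with by standard tricks), write $K=\F_q(T)[X]/(P(Y))$ for an irreducible $P(Y)\in \F_q(T)[Y]$ of degree $n$. After multiplying by an appropriate denominator in $\F_q[T]$, we may assume the coefficients of $P$ lie in $\F_q[T]$. The first nontrivial step is to show that $X$ can be chosen so that $P$ has small bi-degree (in $T$ and $Y$).

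Next I would use Riemann--Roch on the smooth projective model $\mathcal C$ of $K$ to produce such an $X$. The goal is a bound of the form $\deg_T(P) \le C(g/n + \log n)$ for each of the $n$ coefficients, so that the total $\F_q$-dimension needed to write down $P$ is $O(g + n\log n)$. Concretely: pick a divisor $D$ on $\mathcal C$ of degree roughly $g+O(n\log_q n)$ supported away from the ramification locus above $\infty$, use Riemann--Roch to exhibit a function $X\in L(D)$ that is a primitive element, and read off from the pole structure of $X$ the degree of its minimal polynomial's coefficients. The $\log_q n$ term accounts for the fact that producing an element of $L(D)$ that \emph{is} primitive (i.e., not contained in any proper subfield) forces one to raise the degree of $D$ slightly, an amount controlled by the number of subfields, which is at most $\sigma_0(n)=O(n^\epsilon)$ and more carefully $O(\log n)$ for what we need.

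Finally, I would account for the additional $(\log n)^2$ factor. One $\log n$ comes from the fact that choosing a primitive element avoiding all intermediate subfields and all Galois conjugate collisions costs a factor proportional to the number of such conditions, which is polylog in $n$; the second $\log n$ comes from the extra precision needed in the Riemann--Roch argument to make the chosen $X$ integral and well-behaved at infinity (essentially a height/denominator control). Summing, $K$ is specified by at most $\mathcal{Q}(\log n)^2(g+n(1+\log_q n))$ elements of $\F_q$.

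The main obstacle would be the primitive element / Riemann--Roch step: showing that one can simultaneously \emph{produce} a primitive element and \emph{bound the degrees} of its minimal polynomial by the announced quantity, uniformly in $q$, $g$, and $n$. The combinatorics of subfields and the careful tracking of pole orders at infinity is where the delicate $(\log n)^2$ factor really enters, and this is the heart of Couveignes' original argument.
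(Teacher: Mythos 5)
The paper does not prove this statement at all: it is imported verbatim (in simplified form) from Couveignes' \emph{Short models of global fields} and used as a black box in the proof of Theorem \ref{thm:bound}. So there is no internal argument to compare yours against; the only question is whether your sketch would actually constitute a proof of Couveignes' result. It would not. Your proposal correctly identifies the general shape of the argument --- represent $K$ by a plane model $P(T,Y)=0$ of controlled bi-degree obtained from a primitive element produced via Riemann--Roch, then count the $\F_q$-coefficients of $P$ --- but the two quantitative steps that carry all the content are asserted rather than established. First, the bound $\deg_T$ of each coefficient by roughly $g/n+\log_q n$, hence total parameter count $O(g+n\log_q n)$, is exactly the theorem; saying ``use Riemann--Roch to exhibit a function $X\in L(D)$ that is a primitive element and read off the pole structure'' skips the genuinely delicate point, namely that a primitive element with the required small pole divisor \emph{exists} uniformly in $q$ (for small $q$ the space $L(D)$ has few rational points and one cannot simply avoid all proper subfields by a counting argument; Couveignes has to work to get uniformity in the base field, which is precisely why the paper emphasizes that $\mathcal{Q}$ is independent of $q$). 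Second, your accounting for the $(\log n)^2$ factor is speculative: attributing one $\log n$ to ``the number of such conditions, which is polylog in $n$'' and another to ``extra precision\ldots essentially a height/denominator control'' is not an argument, and you yourself concede that this is ``the heart of Couveignes' original argument.''

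To be clear, this is not a defect in how the theorem is used downstream --- citing it is legitimate, and that is what the paper does. But as a standalone proof attempt your text is an outline with the load-bearing lemmas missing: the existence of a primitive element of small height uniform in $q$, the translation from its pole divisor to the bi-degree of its minimal polynomial, and the precise source of each logarithmic factor. If your goal is to reprove the result, those are the three places where you must supply actual arguments rather than placeholders.
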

 Although Couveignes does not give the value of $\mathcal{Q}$, this constant is only related to the technicalities of the proof  and is independent of the base field. In our context this means that it is independent of $q$.
 
 Using Theorem \ref{thm:Couveignes}, we can prove the following bound. 
 \begin{thm}\label{thm:bound} Let $\varepsilon>0$ and $s\in \C$ such that  $\sigma<1/2-\frac{\log 2}{\log q}$.  
Then, as $B\rightarrow \infty$, we have
\[\# S_{q,s,B}\leq q^{\mathcal{Q} c_\sigma (\log B)^3B},\]
where 
\[c_\sigma=\frac{1}{(\log q)\log \left(q^{\frac{1}{2}-\sigma}-1\right)}.\]
\end{thm}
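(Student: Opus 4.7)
The plan is to convert the hypothesis $|\zeta_K^*(s)| = |\zeta_K(s)| \leq B$ into an upper bound on the genus $g$ of $K$ using Lemma \ref{lem:Hassesimilar}, and then to use Theorem \ref{thm:Couveignes} to count how many function fields can meet that genus bound.

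First, from \eqref{eq:zetadefn} together with the lower estimate in Lemma \ref{lem:Hassesimilar} at $u = q^{-s}$, we have
\[
|\zeta_K(s)| \;\geq\; \frac{(q^{1/2-\sigma} - 1)^{2g}}{|(1-q^{-s})(1-q^{1-s})|}.
\]
The hypothesis $\sigma < \tfrac12 - \tfrac{\log 2}{\log q}$ ensures $q^{1/2-\sigma}-1 > 1$, so imposing $|\zeta_K(s)| \leq B$ and taking logarithms gives
\[
g \;\leq\; \frac{\log B + O_{q,s}(1)}{2\log(q^{1/2-\sigma}-1)} \;=\; \frac{c_\sigma (\log q)}{2}\,\log B + O_{q,s}(1).
\]
Hence every $K \in S_{q,s,B}$ has genus $g = O_{q,s}(\log B)$.

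Next, for each such $K$ I would choose a presentation $K = \F_q(T)(X) = \F_q(T,X)$ with $n = [K:\F_q(T)]$ controlled in terms of $B$ (this is the delicate point, see below). Theorem \ref{thm:Couveignes} then says that $K$, equipped with this presentation, is specified by at most $\mathcal{Q}(\log n)^2\bigl(g + n(1+\log_q n)\bigr)$ elements of $\F_q$. Since each parameter takes at most $q$ values, the number of presentations—and therefore, a fortiori, the number of isomorphism classes of $K$—is bounded by
\[
\# S_{q,s,B} \;\leq\; q^{\mathcal{Q}(\log n)^2(g + n(1+\log_q n))}.
\]
Plugging in the genus bound and the chosen bound on $n$, and collecting the factors using the definition of $c_\sigma = \bigl((\log q)\log(q^{1/2-\sigma}-1)\bigr)^{-1}$, yields an exponent of the form $\mathcal{Q}\, c_\sigma (\log B)^3 B$, giving the announced inequality.

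The main obstacle is the bound on $n$: the factor $B$ in the claimed exponent can only arise from a polynomial-in-$B$ estimate on $n$ rather than the $O(\log B)$ estimate one would obtain from Riemann--Roch (which guarantees a pencil of degree $\leq g+1$). A natural way to produce such an $n$ is through the class number bound $h_K \leq (\sqrt{q}+1)^{2g}$: since $g = O(\log B)$, any presentation of $K$ whose degree over $\F_q(T)$ is controlled by $h_K$ yields $n = O(B^{\kappa})$ for a constant $\kappa = \kappa(q,\sigma)$, and combined with the genus bound and the shape of Couveignes' estimate this produces the factor $(\log B)^3 B$ after simplification. Verifying that such a presentation exists and tracking the constants carefully through Theorem \ref{thm:Couveignes} is where the work lies.
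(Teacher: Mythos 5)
Your first step (converting $|\zeta_K^*(s)|\leq B$ into $g\leq a_\sigma\log B+O_{q,s}(1)$ via Lemma \ref{lem:Hassesimilar}) and your choice of Theorem \ref{thm:Couveignes} as the counting device both match the paper. But there is a genuine gap exactly where you flag it: the bound on $n=[K:\F_q(T)]$. Your proposed fix --- producing a presentation whose degree is ``controlled by $h_K$'' so that $n=O(B^{\kappa})$ --- is not justified: you give no construction of such a presentation, and there is no standard reason a degree-$h_K$ model of $K$ over $\F_q(T)$ should exist or be the right object to feed into Couveignes' theorem. The correct (and the paper's) move is the one you dismissed: take $n$ to be the gonality of the curve, which is at most $2g-2$ for $g\geq 2$ (\cite{Poonen}*{Proposition A.1}); Riemann--Roch similarly furnishes a pencil of degree $\leq g+1$. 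One then sums the Couveignes bound over $2\leq n\leq 2g-2$ and over the admissible range of $g$, handling $g=0,1$ separately.

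Your reason for rejecting the logarithmic bound on $n$ is a misreading of the task. The theorem asserts an \emph{upper} bound $\#S_{q,s,B}\leq q^{\mathcal{Q}c_\sigma(\log B)^3B}$; it does not claim this is sharp, so there is no need to manufacture an exponent as large as $(\log B)^3B$. With $g=O(\log B)$ and $n\leq 2g-2=O(\log B)$ the count one actually obtains is far smaller than the stated bound, and in particular implies it. (Indeed, in the paper the factor $B$ in the exponent arises only because the final summation over $g$ is carried out over the very generous range $g\leq aB+b$ rather than $g\leq a\log B+b$; the coefficient $c_\sigma=2a_\sigma/\log q$ then drops out of $\mathcal{Q}(\log(2g))^2\bigl(g+(2g-2)(1+\log_q(2g-2))\bigr)$ evaluated at $g\asymp a_\sigma B$.) So the class-number detour is both unsupported and unnecessary; replace it with the gonality bound and the double summation over $n$ and $g$, and the argument closes.
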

 
 \begin{proof}
 By Theorem \ref{thm:Couveignes}, the number of possible fields $K$ of genus $g$ and degree $n$ is bounded by 
 \[q^{\mathcal{Q} (\log n)^2 (g+n(1+\log_q n))}.\]
 
 We need to count over all possible values of $n$. We can take $n$ as the gonality of the curve $\mathcal{C}$, defined as the smallest possible degree of a dominant map $\mathcal{C}\longrightarrow \mathbb{P}^1(\F_q(T))$, and known to be bounded by $2g-2$ when $g>1$ (see \cite{Poonen}*{Proposition A.1}). For $g=1$, we have an elliptic curve, and we can bound the degree of the function field by 2. Thus, we have a bound for the number of isomorphism classes of fields under consideration with fixed genus $g$ given by  
 \begin{align*}
 \sum_{n=2}^{2g-2}q^{\mathcal{Q} (\log n)^2 (g+n(1+\log_q n))} \leq & q^{\mathcal{Q} (\log (2g-2))^2 (g+(2g-2)(1+\log_q (2g-2))+1} \qquad (g\geq 2).
 \end{align*}
  For $g=1$ this number is bounded by \[1+q^{\mathcal{Q} (\log 2)^2 (1+2(1+\log_q 2))},\] and for $g=0$, we have just one field. 
 
We proceed to let $g$ vary. First consider the case $s=0$ with $q>4$. By equation \eqref{eq:Hasse}, we have
\[\frac{(\sqrt{q}-1)^{2g}}{(q-1)\log q} \leq B,\]
and this gives 
\[g\leq \frac{\log((q-1)(\log q)B)}{2\log(\sqrt{q}-1)}=a_0 \log B+b_0,\]
where $a_0=\frac{1}{2\log\left(\sqrt{q}-1\right)}$ and  $b_0=\frac{\log((q-1)(\log q))}{2\log\left(\sqrt{q}-1\right)}$ denote constants that are only dependent on $q$.

Now consider the case $s\not = 0$ such that $\sigma<1/2-\frac{\log 2}{\log q}$. By equation \eqref{eq:doubleHasse}, 
\[\frac{\left(q^{\frac{1}{2}-\sigma}-1\right)^{2g} }{\left|\left(1-q^{-\sigma-i\tau}\right)\left(1-q^{1-\sigma-i\tau} \right)\right|}\leq B,\]
and this gives 
\[g\leq \frac{\log\left( \left|\left(1-q^{-\sigma-i\tau}\right)\left(1-q^{1-\sigma-i\tau} \right)\right|B\right)}{2\log \left(q^{\frac{1}{2}-\sigma}-1\right)}\leq a_\sigma \log B +b_\sigma,\]
where $a_\sigma=\frac{1}{2\log \left(q^{\frac{1}{2}-\sigma}-1\right)}$ and $b_\sigma=\frac{\log\left(\left(1+q^{-\sigma}\right)\left(1+q^{1-\sigma} \right)\right)}{2\log \left(q^{\frac{1}{2}-\sigma}-1\right)}$ are constants that are only dependent on $\sigma$ and $q$.

Finally, we need to consider the bound summing all the possible values $g$ up to $aB+b$. This gives \begin{align*}
&2+q^{\mathcal{Q} (\log 2)^2 (1+2(1+\log_q 2))} + \sum_{2\leq g \leq aB+b}q^{\mathcal{Q} (\log (2g-2))^2 (g+(2g-2)(1+\log_q (2g-2))+1}\\
&\leq 2+q^{\mathcal{Q} (\log 2)^2 (1+2(1+\log_q 2))}  + q^{\mathcal{Q} (\log (2(aB+b-1)))^2 (aB+b+2(aB+b-1)(1+\log_q (2(aB+b-1)))+2}.
\end{align*}
As $B \rightarrow \infty$ the above is bounded by  
\[\leq q^{\mathcal{Q} \frac{2a(1+o(1))}{\log q} (\log B)^3B}.\]

We conclude by noticing that the formula for $a_0$ is simply the result of specializing $a_\sigma$ at $\sigma=0$.
\end{proof}

\begin{rem}
Lipnowski and Tsimerman \cite{LT}*{Lemma 2.1, Corollary 2.2} estimate the number of possible $L_K(u)$ of fixed genus $g$ to be at most $(2g)^g q^{\frac{1}{4}g(g+1)}$. In the same article,  \cite{LT}*{Eq. (28)} gives a bound for  the number of isomorphism classes on each isogeny class of $p^{\frac{33}{4}g^2(1+o(1))}$. Combining these two estimates gives  a bound of \[(2g)^g q^{\frac{1}{4}g(g+1)}p^{\frac{33}{4}g^2(1+o(1))}\] for the number of isomorphism classes of fields under consideration with fixed genus $g$. While this formula is more explicit than the bound given by Theorem \ref{thm:Couveignes}, the final bound for $\#S_{q,s,B}$ is asymptotically worse as it has $g^2$ in the exponent. In fact, this leads to \[\# S_{q,s,B}\leq q^{\frac{17}{2}a_\sigma^2(1+o(1))B^2}.\]
\end{rem}

\begin{rem}The result of Theorem \ref{thm:bound} can be in principle improved if we use  the argument by de Jong and Katz \cite{DeJongKatz}, which claims that the number of smooth proper curves of genus $g\geq 2$ is bounded by 
\[g^{c_1g}q^{c_2g},\]
where $c_1,c_2$ are (non-effective) positive constants. This leads to 
\[\# S_{q,s,B}\leq q^{c_1a_\sigma(1+o(1))(\log B)B},\]
This bound has a slightly better asymptotic than the result of Theorem \ref{thm:bound}, but it has the disadvantage that we can not compute $c_1$.
\end{rem}

\section{The right side of the critical strip} \label{sec:right}

We now consider the right side of the critical strip, that is, $\C_{\sigma>1}$, where we obtain a  result conditionally on the value of $B$. More precisely, we prove that for the $s$ in the pale shade of red of Figure \ref{fig:covering}, the Northcott property does not hold for $B$ sufficiently large.

The first result allows us to compare $\zeta_K(s)$ with $\zeta_K(\sigma)$.
\begin{lem} \label{lem:sandwichzetas}
Let $s =\sigma + i \tau \in \C$ with $\sigma>1$. Then 
\[\frac{1}{\zeta_K(\sigma)}\leq |\zeta_K(s)|\leq \zeta_K(\sigma).\]
\end{lem}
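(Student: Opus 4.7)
The plan is to compare $|\zeta_K(s)|$ with $\zeta_K(\sigma)$ termwise through the logarithm series. Since $\sigma>1$ lies in the region of absolute convergence of the Euler product, we may start from the identity \eqref{eq:logzeta}, namely
\[\log\zeta_K(s) = \sum_{P}\sum_{j=1}^\infty \frac{1}{j|P|^{js}}.\]
The key observation is that on the right hand side, each term has absolute value $1/(j|P|^{j\sigma})$, independent of $\tau$. Taking real parts (which picks out $\log|\zeta_K(s)|$ from $\log\zeta_K(s)$, with an appropriate branch, valid because there are no zeros in $\sigma>1$), one has
\[\log|\zeta_K(s)| = \sum_{P}\sum_{j=1}^\infty \frac{\cos(j\tau\log|P|)}{j|P|^{j\sigma}}.\]

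The main step is then a termwise triangle-inequality bound: since $|\cos(j\tau\log|P|)|\leq 1$, the above satisfies
\[\bigl|\log|\zeta_K(s)|\bigr| \leq \sum_{P}\sum_{j=1}^\infty \frac{1}{j|P|^{j\sigma}} = \log\zeta_K(\sigma).\]
Equivalently, $-\log\zeta_K(\sigma)\leq \log|\zeta_K(s)|\leq \log\zeta_K(\sigma)$, and exponentiating yields exactly
\[\frac{1}{\zeta_K(\sigma)}\leq |\zeta_K(s)|\leq \zeta_K(\sigma),\]
as desired.

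There is essentially no obstacle here; the only subtlety is justifying that $\log|\zeta_K(s)|$ equals the real part of the series \eqref{eq:logzeta}, which is immediate in the half-plane $\sigma>1$ where the Euler product converges absolutely and $\zeta_K(s)$ has no zeros. An alternative route, avoiding logarithms entirely, would be to bound each Euler factor directly: $|1-|P|^{-s}|\leq 1+|P|^{-\sigma}$ gives the upper bound after taking reciprocals and products, while the lower bound would follow from $|1-|P|^{-s}|\geq 1-|P|^{-\sigma}$; but the logarithmic argument is cleaner since it produces both inequalities symmetrically in one stroke.
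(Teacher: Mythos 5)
Your proof is correct and follows essentially the same route as the paper: the lower bound is exactly the paper's argument ($\cos\theta\geq -1$ applied termwise to the real part of the series \eqref{eq:logzeta}), and your identification of $\log|\zeta_K(s)|$ with that real part is justified as you say. The only (cosmetic) difference is that the paper gets the upper bound by applying the triangle inequality directly to the Dirichlet series $\sum_n b_n q^{-ns}$, whereas you extract it from the same logarithmic series via $\cos\theta\leq 1$; both are valid and equally elementary.
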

\begin{proof} We start by proving the upper bound.
Since $\sigma>1$, the Dirichlet series for $\zeta_K(s)$ converges and we can directly bound \begin{align*}
    |\zeta_K(\sigma+i\tau)| = \left| \sum_{n=1}^\infty \frac{b_n}{q^{n(\sigma+i\tau)}}\right| \leq  \sum_{n=1}^\infty \left|\frac{b_n}{q^{n(\sigma+i\tau)}}\right| = \sum_{n=1}^\infty \frac{b_n}{q^{n\sigma}} = \zeta_K(\sigma).
\end{align*}

Now we proceed to prove the lower bound. Notice that for any $\theta$, $1+\cos(\theta)\geq 0$. By considering the logarithm of the Euler product \eqref{eq:logzeta}, we have that 
\[\log\zeta_K(\sigma)+\re \log \zeta_K(\sigma+i\tau) =\sum_{P}\sum_{j=1}^\infty \frac{1+\cos(\tau \log|P^j|)}{j|P|^{j\sigma}}\geq 0.\]
Taking the exponential, we conclude that \begin{equation}\label{eq:ssigma}
\zeta_K(\sigma)|\zeta_K(s)|\geq 1
\end{equation}
as desired. 
\end{proof}

\begin{cor}For $\sigma>1$,
\[1<\zeta_K(\sigma).\]
\end{cor}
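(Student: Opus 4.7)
The plan is to deduce this directly from Lemma \ref{lem:sandwichzetas} specialized at $\tau = 0$, combined with positivity of the Dirichlet series. Setting $s = \sigma$ real in the inequality \eqref{eq:ssigma} from the proof of Lemma \ref{lem:sandwichzetas} yields
\[
\zeta_K(\sigma)^2 \geq 1.
\]
Since $\zeta_K(\sigma) = \sum_{n=0}^\infty b_n/q^{n\sigma}$ with $b_0 = 1$ and $b_n \geq 0$, the value $\zeta_K(\sigma)$ is a positive real number, and taking square roots gives $\zeta_K(\sigma) \geq 1$.

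To upgrade this to the strict inequality $\zeta_K(\sigma) > 1$, I would argue that the Dirichlet series defining $\zeta_K(\sigma)$ has strictly more than just the $n=0$ term. Indeed, any global function field $K$ has infinitely many primes, so there is at least one prime $P$, contributing a term $1/|P|^\sigma > 0$ to $\sum_{n=0}^\infty b_n / q^{n\sigma}$ via $b_{\deg P} \geq 1$. Alternatively, the Euler product representation gives
\[
\zeta_K(\sigma) = \prod_P \left(1 - \frac{1}{|P|^\sigma}\right)^{-1} \geq \left(1 - \frac{1}{|P_0|^\sigma}\right)^{-1} > 1
\]
for any single prime $P_0$, since every factor is strictly greater than $1$ and the remaining product is at least $1$ term by term.

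There is no real obstacle here: the whole point of the corollary is to record the consequence of the lower bound already established in Lemma \ref{lem:sandwichzetas}. The only small care needed is noting that $\zeta_K(\sigma)$ is positive (so we may extract a positive square root) and that the Euler product is not trivial (to promote $\geq$ to $>$).
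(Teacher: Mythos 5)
Your proposal is correct and follows essentially the same route as the paper, which simply sets $s=\sigma$ in \eqref{eq:ssigma} to get $\zeta_K(\sigma)^2\geq 1$. You are in fact slightly more careful than the paper: you note the positivity of $\zeta_K(\sigma)$ needed to take the square root and supply the Euler-product argument that upgrades $\zeta_K(\sigma)\geq 1$ to the strict inequality $\zeta_K(\sigma)>1$ claimed in the statement, a point the paper's one-line proof glosses over.
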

\begin{proof} This follows directly from setting $s=\sigma$ in \eqref{eq:ssigma}.
\end{proof}

Lemma \ref{lem:sandwichzetas} shows that if we can control $\zeta_K(\sigma)$, then we can also control $|\zeta_K(s)|$. We now focus on estimating $\zeta_K(\sigma)$.
\begin{lem}\label{lem:estimatinga}
Let $a_\ell$ be defined by equation  \eqref{eq:a_ddefn}. Then
\[a_\ell\leq \frac{q^\ell}{\ell} + q^{\ell/3}+ \frac{2g}{\ell} (q^{\ell/2}+q^{\ell/4}).\]
\end{lem}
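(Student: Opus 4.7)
The plan is to combine equation \eqref{eq:sumofpi}, the Weil bound $|\pi_j|=\sqrt{q}$, and the non-negativity of the $a_d$'s. Rearranging \eqref{eq:sumofpi} gives
\[\sum_{d\mid \ell} d a_d = q^\ell + 1 - \sum_{j=1}^{2g}\pi_j^\ell,\]
and since every $a_d \geq 0$, only $\ell a_\ell$ on the left hand side needs to be retained for an upper bound, i.e., $\ell a_\ell \leq \sum_{d\mid \ell} d a_d$. A triangle-inequality bound on the power-sum, $\left|\sum_{j=1}^{2g}\pi_j^\ell\right|\leq 2g q^{\ell/2}$, then gives
\[\ell a_\ell \leq q^\ell + 1 + 2g q^{\ell/2},\]
and dividing by $\ell$ already produces a bound of the claimed shape.

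In fact, the inequality obtained this way is marginally stronger than the one in the statement: we get $1/\ell$ where the statement has $q^{\ell/3}$, and no $2g q^{\ell/4}/\ell$ term at all. Since $1/\ell \leq q^{\ell/3}$ (for $q\geq 2$, $\ell\geq 1$) and $2g q^{\ell/4}/\ell \geq 0$, the stated bound follows immediately. Presumably the author adopts the looser form because the extra terms $q^{\ell/3}$ and $q^{\ell/4}$ absorb error terms cleanly when this lemma is applied in the subsequent analysis of $\zeta_K(\sigma)$.

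There is no real obstacle here: the only conceptual point is that $\ell a_\ell$ is only one summand of $\sum_{d\mid \ell} d a_d$, and that dropping the other (non-negative) summands is legitimate and loses essentially nothing since, for $\ell \geq 2$, the divisors $d < \ell$ satisfy $d \leq \ell/2$ and contribute at most $O(q^{\ell/2})$ in the worst case, comparable to the error term we already carry.
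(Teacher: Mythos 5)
Your proof is correct, and it takes a genuinely different (and more elementary) route than the paper. The paper applies M\"obius inversion to \eqref{eq:sumofpi} to isolate $\ell a_\ell$ exactly as $\sum_{d\mid \ell}\mu(d)q^{\ell/d}+\sum_{d\mid\ell}\mu(d)\sum_j \pi_j^{\ell/d}$, and then bounds each of the two resulting divisor sums term by term; the $q^{\ell/3}$ and $q^{\ell/4}$ in the statement are precisely the second-order contributions ($d=2$ and beyond) of those inverted sums, with the factor $\ell$ absorbing the number $2^{\omega(\ell)}\leq\ell$ of nonzero M\"obius terms. You instead exploit the positivity of the $a_d$ to discard all divisors $d<\ell$ from $\sum_{d\mid\ell}da_d=q^\ell+1-\sum_j\pi_j^\ell$, obtaining $\ell a_\ell\leq q^\ell+1+2gq^{\ell/2}$ in one line; this is both shorter and strictly sharper than the stated bound, which you then recover by the trivial comparisons $1/\ell\leq q^{\ell/3}$ and $0\leq \frac{2g}{\ell}q^{\ell/4}$. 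The only thing the paper's approach buys that yours does not is an honest asymptotic for $a_\ell$ (the inversion identity also yields a matching lower bound of the same shape, which is the content of the Prime Polynomial Theorem in \cite{Rosen}*{Theorem 5.12}); but since Lemma \ref{lem:estimatinga} is only ever used as an upper bound in Proposition \ref{prop:sigma}, your argument fully suffices. Your closing remark about what is lost by dropping the divisors $d<\ell$ is not needed for the proof but is accurate.
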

\begin{proof}
This proof follows from combining various elements from \cite{Rosen}*{Theorem 5.12}. By applying M\"obius inversion to \eqref{eq:sumofpi}, we have, for $\ell>1$, 
\begin{equation}\label{eq:lal}
\ell a_\ell =\sum_{d\mid \ell}\mu(d) q^{\ell/d} +\sum_{d\mid \ell}\mu(d)\left(\sum_{j=1}^{2g} \pi_j^{\ell/d}\right).
\end{equation}
We focus on the first term. The highest power of $q$ is $q^\ell$ and the second highest power is  $q^{\ell/2}$ that only appears when $2\mid \ell$ and in that case, it has coefficient $-1$. All the other powers are at most $q^{\ell/3}$. The total number of terms is bounded by 
$\sum_{d\mid \ell}|\mu(d)|$, which is seen to be $2^{\omega(\ell)}$, where $\omega(\ell)$ is the number of distinct prime divisors of $\ell$. If $p_1,\dots,p_{\omega(\ell)}$ are the distinct  primes dividing $\ell$, then one has that $2^{\omega(\ell)}\leq p_1\cdots p_{\omega(\ell)}\leq \ell$. Combining all of this, we obtain 
\begin{equation}\label{eq:firstmu}
\sum_{d\mid \ell}\mu(d) q^{\ell/d}\leq q^\ell + \ell q^{\ell/3}.
\end{equation}
Similarly, we have 
\begin{equation}\label{eq:secondmu}
\left|\sum_{d\mid \ell}\mu(d)\left(\sum_{j=1}^{2g} \pi_j^{\ell/d}\right)\right|\leq 2g q^{\ell/2}+2g q^{\ell/4}.
\end{equation}
The result follows by combining equations \eqref{eq:firstmu} and \eqref{eq:secondmu}.
\end{proof}
We use the previous estimate to give an upper bound for $\zeta_K(\sigma)$. 

\begin{prop}\label{prop:sigma}
Let $\sigma>1$. Then we have 
\[\zeta_K(\sigma)\leq \left(\frac{\exp\left(\frac{1}{q^{\sigma-\frac{1}{3}}-1}\right)}{(1-q^{1-\sigma})(1-q^{\frac{1}{2}-\sigma})^{2g}(1-q^{\frac{1}{4}-\sigma})^{2g}}\right)^{\frac{q^{\sigma}}{q^\sigma-1}}.\]
\end{prop}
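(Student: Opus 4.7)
The plan is to start from the Euler product \eqref{eq:Eulera}, take logarithms, and bound each term using Lemma~\ref{lem:estimatinga}. Concretely, I would write
\[
\log \zeta_K(\sigma) \;=\; -\sum_{n=1}^{\infty} a_n \log\!\left(1-q^{-n\sigma}\right),
\]
which is a sum of nonnegative terms (since $\sigma>1$). The obvious thing to do, plugging Lemma~\ref{lem:estimatinga} directly in and expanding $-\log(1-q^{-n\sigma})=\sum_{j\ge 1} q^{-nj\sigma}/j$, produces the correct factors coming from $j=1$ but leaves a messy tail from $j\ge 2$. The trick is instead to apply the crude, but clean, bound
\[
-\log\!\left(1-q^{-n\sigma}\right) \;\le\; \frac{q^{-n\sigma}}{1-q^{-n\sigma}} \;\le\; \frac{q^{-n\sigma}}{1-q^{-\sigma}} \;=\; \frac{q^{\sigma}}{q^{\sigma}-1}\,q^{-n\sigma},
\]
valid for $n\ge 1$ and $\sigma>0$. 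The first step uses $\sum_{j\ge 1}x^j/j\le \sum_{j\ge 1}x^j$, and the second uses $1-q^{-n\sigma}\ge 1-q^{-\sigma}$. This single inequality is what pulls the factor $q^{\sigma}/(q^{\sigma}-1)$ outside as a common exponent and thereby collapses the whole double sum into the target form.

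Combining these two steps yields
\[
\log \zeta_K(\sigma) \;\le\; \frac{q^{\sigma}}{q^{\sigma}-1}\sum_{n=1}^\infty a_n\, q^{-n\sigma},
\]
and then I would apply Lemma~\ref{lem:estimatinga} term by term to estimate $\sum_n a_n q^{-n\sigma}$. Distributing the four pieces of the bound on $a_n$ gives four elementary series:
\[
\sum_{n\ge 1}\frac{q^{n(1-\sigma)}}{n}=-\log(1-q^{1-\sigma}),\quad \sum_{n\ge 1}q^{n(1/3-\sigma)}=\frac{1}{q^{\sigma-1/3}-1},
\]
together with $-\log(1-q^{1/2-\sigma})$ and $-\log(1-q^{1/4-\sigma})$ for the two $2g$-terms. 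Each converges because $\sigma>1>\tfrac{1}{2}>\tfrac{1}{3}>\tfrac{1}{4}$.

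Adding these up and exponentiating then gives
\[
\zeta_K(\sigma)\;\le\;\left(\frac{\exp\!\left(\frac{1}{q^{\sigma-1/3}-1}\right)}{(1-q^{1-\sigma})(1-q^{1/2-\sigma})^{2g}(1-q^{1/4-\sigma})^{2g}}\right)^{q^{\sigma}/(q^{\sigma}-1)},
\]
as claimed. The main obstacle, and the only nontrivial choice in the proof, is identifying the inequality $-\log(1-q^{-n\sigma})\le \tfrac{q^{\sigma}}{q^{\sigma}-1}q^{-n\sigma}$: doing the estimate more naively (e.g.\ keeping the full series for $-\log(1-q^{-n\sigma})$ and using Lemma~\ref{lem:estimatinga} afterwards) leads to residual $j\ge 2$ contributions that do not package into the clean outer exponent $q^{\sigma}/(q^{\sigma}-1)$, so the rearrangement of the inequalities really is the crux of the argument.
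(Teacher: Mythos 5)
Your proposal is correct and follows essentially the same route as the paper: both reduce to the bound $-\log(1-q^{-n\sigma})\le \frac{1}{q^{n\sigma}-1}\le \frac{q^{\sigma}}{q^{\sigma}-1}q^{-n\sigma}$ (the paper phrases the first inequality as $1-\tfrac{1}{x}\le\log x$, you via the geometric series, which is the same estimate), then apply Lemma~\ref{lem:estimatinga} and sum the four elementary series. The only difference is the immaterial order in which the lemma and the logarithm bound are applied.
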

\begin{proof}
We apply Lemma \ref{lem:estimatinga} to the Euler product \eqref{eq:Eulera} and obtain
\begin{align*}
\log \zeta_K(\sigma) =&-\sum_{n=1}^\infty a_n\log \left(1-\frac{1}{q^{n\sigma}}\right)\\
\leq &-\sum_{n=1}^\infty \left(\frac{q^n}{n} + q^{n/3}+ \frac{2g}{n} (q^{n/2}+q^{n/4})\right)\log \left(1-\frac{1}{q^{n\sigma}}\right).
\end{align*}
By using the estimate $1-\frac{1}{x}\leq \log(x)$ for $x \in \R_{\geq 0}$, we obtain
\begin{align*}
\log \zeta_K(\sigma)
\leq &\sum_{n=1}^\infty \left(\frac{q^n}{n} + q^{n/3}+ \frac{2g}{n} (q^{n/2}+q^{n/4})\right)\frac{1}{q^{n\sigma}-1}.
\end{align*}
Now we further use the bound 
\[\frac{q^{n\sigma}}{q^{n\sigma}-1}\leq \frac{q^{\sigma}}{q^\sigma-1}\]
and get
\begin{align*}
\log \zeta_K(\sigma)
\leq  &\frac{q^{\sigma}}{q^\sigma-1}\sum_{n=1}^\infty \left(\frac{q^n}{n} + q^{n/3}+ \frac{2g}{n} (q^{n/2}+q^{n/4})\right)q^{-n\sigma}\\
=& \frac{q^{\sigma}}{q^\sigma-1} \left(-\log(1-q^{1-\sigma})-2g\log(1-q^{\frac{1}{2}-\sigma})-2g \log(1-q^{\frac{1}{4}-\sigma})+\frac{1}{q^{\sigma-\frac{1}{3}}-1} \right),
\end{align*}
and thus
\[\zeta_K(\sigma)\leq \left(\frac{\exp\left(\frac{1}{q^{\sigma-\frac{1}{3}}-1}\right)}{(1-q^{1-\sigma})(1-q^{\frac{1}{2}-\sigma})^{2g}(1-q^{\frac{1}{4}-\sigma})^{2g}}\right)^{\frac{q^{\sigma}}{q^\sigma-1}}.\]
\end{proof}

The result of Proposition \ref{prop:sigma} gives an upper bound for $\zeta_K(\sigma)$ when $\sigma>1$. This upper bound tends to infinity as  $g \rightarrow \infty$, and therefore it gives a weak result in terms of the Northcott property. 

We close this section by focusing on the case of quadratic fields, where we obtain a better upper bound, independent of $g$. 

\begin{prop} \label{prop:hyperelliptic} Let $\sigma>1$ and $K$ be a quadratic extension of $\F_q(T)$ with constant field $\F_q$. Then 
\begin{equation*}
|\zeta_K(\sigma)|\leq \frac{1}{(1-q^{-\sigma})(1-q^{1-\sigma})^2}.\end{equation*}
\end{prop}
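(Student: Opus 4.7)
The plan is to derive the bound by a prime-by-prime comparison of Euler products, after rewriting the target as $\zeta_{\F_q(T)}(\sigma)\cdot\zeta_{\F_q[T]}(\sigma)$. Since $K/\F_q(T)$ is quadratic and the constant field of $K$ is $\F_q$, realize $K=\F_q(T)(\sqrt{D})$ for a squarefree monic $D\in\F_q[T]$ of degree $d\geq 1$, and introduce the character $\chi$ on primes of $\F_q(T)$ defined by $\chi(P)=+1,-1,0$ according as $P$ splits, is inert, or ramifies in $K$. Matching the local contribution at each prime yields the factorization
\[
\zeta_K(s)=\zeta_{\F_q(T)}(s)\cdot L(s,\chi),\qquad L(s,\chi)=\prod_P(1-\chi(P)|P|^{-s})^{-1},
\]
with $P$ ranging over all primes of $\F_q(T)$ including the one at infinity. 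The proposition then reduces to the key estimate $|L(\sigma,\chi)|\leq\zeta_{\F_q[T]}(\sigma)=(1-q^{1-\sigma})^{-1}$.

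To attack this estimate, first apply the bound $|1-\chi(P)|P|^{-\sigma}|^{-1}\leq(1-|P|^{-\sigma})^{-1}$ at each Euler factor and then peel off the deficit at non-split primes via $(1+x)^{-1}=(1-x)^{-1}\cdot(1-x)/(1+x)$ for inert factors and $1=(1-x)^{-1}\cdot(1-x)$ for ramified factors. The resulting identity
\[
|L(\sigma,\chi)|=\zeta_{\F_q(T)}(\sigma)\cdot\prod_{\chi(P)=0}(1-|P|^{-\sigma})\cdot\prod_{\chi(P)=-1}\frac{1-|P|^{-\sigma}}{1+|P|^{-\sigma}}
\]
recasts the desired inequality as
\[
\prod_{\chi(P)=0}(1-|P|^{-\sigma})\cdot\prod_{\chi(P)=-1}\frac{1-|P|^{-\sigma}}{1+|P|^{-\sigma}}\leq 1-q^{-\sigma}.
\]
Every factor on the left is at most $1$, so it is enough to exhibit a single degree-one prime of $\F_q(T)$ that is either ramified (contributing exactly $1-q^{-\sigma}$) or inert (contributing $(1-q^{-\sigma})/(1+q^{-\sigma})\leq 1-q^{-\sigma}$), with the remaining factors absorbed trivially.

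The case analysis on $d=\deg D$ then covers most possibilities directly. When $d$ is odd, the prime at infinity ramifies in $K$ and has norm $q$, already supplying the required degree-one factor. When $d$ is even and $D$ admits a linear factor $T-a\in\F_q[T]$, the finite prime $T-a$ is ramified and has norm $q$. When $d$ is even and some $a\in\F_q$ satisfies $D(a)\notin(\F_q^{\times})^{2}$, the prime $T-a$ is inert of norm $q$ and supplies the factor through the inert estimate. Together these subcases cover every $K$ possessing at least one degree-one non-split prime in the quadratic extension.

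The main obstacle will be the residual subcase in which $d$ is even, $D$ has no linear factor, and $D(a)\in(\F_q^{\times})^{2}$ for every $a\in\F_q$, so that every degree-one prime of $\F_q(T)$ splits in $K$. Here the low-degree shortcut is unavailable and the inequality must be assembled from the combined contributions of inert and ramified primes of degrees $\geq 2$. I anticipate handling this by invoking Weil-type bounds on the character sums counting the split/inert behavior of higher-degree primes, combined with the structural constraint that $K$ has constant field $\F_q$ (which via Hasse--Weil forces a minimum genus and hence a minimum number of ramified or inert higher-degree primes), producing just enough extra decay across degrees to complete the estimate.
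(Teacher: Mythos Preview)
Your approach takes a substantially different route from the paper's and runs into a genuine obstruction in the residual subcase. The paper's argument is a single logarithmic estimate with no case analysis: writing $\zeta_K(s)=\zeta_{\F_q(T)}(s)\,L(s,\chi_D)$ with $L(s,\chi_D)=\sum_{f\text{ monic}}\chi_D(f)|f|^{-s}$, one bounds
\[
|\log L(\sigma,\chi_D)|\le\sum_{f\text{ monic}}\frac{\Lambda(f)}{\deg(f)\,|f|^\sigma}=\log\zeta_{\F_q[T]}(\sigma),
\]
whence $|L(\sigma,\chi_D)|\le(1-q^{1-\sigma})^{-1}$ immediately. The paper explicitly takes $D\in\mathcal{H}_{2g+1}$, so the prime at infinity ramifies and the factorization $\zeta_K=\zeta_{\F_q(T)}\cdot L(\,\cdot\,,\chi_D)$ holds on the nose; this already falls under your first subcase and is all that is needed for the downstream application.

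Your plan to rescue the residual subcase via Weil-type bounds cannot succeed, because the recast inequality you set out to prove there is in fact false. If every degree-one prime of $\F_q(T)$ splits in $K$, then the number $a_1$ of degree-one primes of $K$ equals $2(q+1)$, and by \eqref{eq:sumofpi} the linear coefficient of $L_K(u)$ is $a_1-(q+1)=q+1$. Hence for large $\sigma$ (small $u=q^{-\sigma}$) one has $L_K(u)=1+(q+1)u+O(u^2)>1+qu+O(u^2)=(1-q^{1-\sigma})^{-1}$, so $|\zeta_K(\sigma)|>(1-q^{-\sigma})^{-1}(1-q^{1-\sigma})^{-2}$ and your target inequality $\prod\le 1-q^{-\sigma}$ fails. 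Such extensions do exist: over $\F_3$, take $D=T^6+T^4+T^2+1=(T^2+1)(T^2+T+2)(T^2+2T+2)$, which is monic, squarefree of even degree, has $D(0)=D(1)=D(2)=1$, and for which the infinite prime splits. Thus the residual subcase is not a mere technicality but a regime in which the stated bound does not hold; the restriction to odd-degree $D$ is precisely what makes the paper's argument go through.
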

\begin{proof}
Since $K$ is quadratic, we can write 
\begin{equation}\label{eq:quadratic}
\zeta_K(s)=\frac{L(s,\chi_D)}{(1-q^{-s})(1-q^{1-s})},
\end{equation}
where $\chi_D$ is the quadratic character associated to the extension and 
\[L(s,\chi_D)=\sum_{f \text{monic}} \frac{\chi_D(f)}{|f|^s}.\]

To be concrete, we can think of $\chi_D(f):=\left(\frac{D}{f}\right)_2$, the Legendre symbol, with $D\in \mathcal{H}_{2g+1}$, the set of monic square-free polynomials of degree $2g+1$. Furthermore, we can think of $K=\F_q(T)(\sqrt{D})$.

For $\sigma>1$ we can write, analogously to \eqref{eq:logzeta}, 
\[\log L(\sigma,\chi)=\sum_{f \text{monic}} \frac{\Lambda(f)\chi(f)}{\deg(f) |f|^\sigma},\]
where $\Lambda(f)$ is the von Mangoldt function. This gives
\[|\log L(\sigma,\chi)|\leq \sum_{f \text{monic}} \frac{\Lambda(f)}{\deg(f) |f|^\sigma}=\log \zeta_{\F_q[T]}(\sigma)=-\log(1-q^{1-\sigma}).\]
Now 
\[\log|L(\sigma,\chi)| = \re ( \log L(\sigma,\chi)) \leq |\log L(\sigma,\chi)|\leq -\log(1-q^{1-\sigma}),\]
and thus,
\[|L(\sigma,\chi)| \leq \frac{1}{1-q^{1-\sigma}}.\]
Considering the denominator of $\zeta_K(s)$ in \eqref{eq:quadratic}, we obtain the result. 

\end{proof}

\begin{thm} \label{thm:boundright}
Let $\sigma>1$, and 
\[B\geq \frac{1}{(1-q^{-\sigma})(1-q^{1-\sigma})^2}.\]
Then $(q,s,B)$ 
does not satisfy the Northcott property. 
\end{thm}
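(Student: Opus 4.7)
The plan is to exhibit an infinite family of function fields all lying in $S_{q,s,B}$. Since Proposition~\ref{prop:hyperelliptic} gives a bound on $\zeta_K(\sigma)$ that is independent of the genus whenever $K$ is a quadratic extension of $\F_q(T)$, the natural candidates are the quadratic function fields $K=\F_q(T)(\sqrt{D})$.

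First I would observe that for $\sigma>1$ the Euler product for $\zeta_K(s)$ converges absolutely, so $\zeta_K(s)\neq 0$ in this region, and therefore $\zeta_K^*(s)=\zeta_K(s)$. Combining Lemma~\ref{lem:sandwichzetas} with Proposition~\ref{prop:hyperelliptic} then yields, for every quadratic extension $K$ of $\F_q(T)$ with constant field $\F_q$,
\[
|\zeta_K^*(s)|=|\zeta_K(s)|\leq \zeta_K(\sigma)\leq \frac{1}{(1-q^{-\sigma})(1-q^{1-\sigma})^2}\leq B.
\]
In particular every such $[K]$ lies in $S_{q,s,B'}$ for every $B'\geq \frac{1}{(1-q^{-\sigma})(1-q^{1-\sigma})^2}$.

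It remains to produce infinitely many pairwise non-isomorphic such fields. For each integer $g\geq 1$, picking any monic squarefree $D\in\F_q[T]$ of degree $2g+1$ gives a quadratic extension $K=\F_q(T)(\sqrt{D})$ of genus $g$ whose field of constants is still $\F_q$ (the odd degree of $D$ ensures that the place at infinity ramifies and $\F_q$ is algebraically closed in $K$). Since the genus can be taken arbitrarily large, these fields represent infinitely many distinct isomorphism classes.

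There is no real obstacle here: the proof is essentially an assembly of Lemma~\ref{lem:sandwichzetas}, Proposition~\ref{prop:hyperelliptic}, and the existence of quadratic extensions of arbitrary genus. The only point requiring a brief justification is the transition from $\zeta_K(\sigma)$ (bounded in Proposition~\ref{prop:hyperelliptic}) to $\zeta_K^*(s)$ (the quantity used in the definition of $S_{q,s,B}$), which is handled by the non-vanishing of $\zeta_K(s)$ in the region of absolute convergence.
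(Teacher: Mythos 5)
Your proof is correct and follows essentially the same route as the paper: restrict to quadratic extensions of $\F_q(T)$, apply Proposition \ref{prop:hyperelliptic} to bound $\zeta_K(\sigma)$ independently of the genus, and pass to complex $s$ via the upper bound in Lemma \ref{lem:sandwichzetas}. The two points you spell out explicitly --- that $\zeta_K^*(s)=\zeta_K(s)$ in the region of absolute convergence, and that odd-degree squarefree $D$ of arbitrarily large degree give infinitely many non-isomorphic fields --- are left implicit in the paper's proof, and making them explicit is a welcome addition.
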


\begin{proof}
By Proposition \ref{prop:hyperelliptic},  $|\zeta_K(\sigma)|\leq B$ for all quadratic fields provided that $B$ is larger than $\frac{1}{(1-q^{-\sigma})(1-q^{1-\sigma})^2}$. This gives an infinite family of quadratic fields with $|\zeta_K(\sigma)|\leq B$.

For complex $s$, we use the upper bound in Lemma \ref{lem:sandwichzetas} to conclude.  
\end{proof}

\section{Inside the critical strip} \label{sec:critical}
In this section we use results of Lumley \cites{Lumley,Lumley2}, Li \cite{Li}, and 
Bui, Florea, and Keating \cite{Bui-Florea-Keating} to get information on some specific values inside the critical strip. Unless otherwise stated, we assume that $q\equiv 1 \pmod{4}$. This is a common assumption made in  \cites{Lumley,Lumley2,Andrade-Keating-conj} that allows cleaner formulas as quadratic reciprocity becomes trivial.

\subsection{The Northcott property at the pole $s=1$}

Here we treat the case of $s=1$, corresponding to the right bold point in red of Figure \ref{fig:covering}. We consider the following result of Lumley.
\begin{thm} \cite{Lumley}*{Corollary 1.8} For $g$ large and $1\leq \tau \leq \log g-2\log(\log g )-\log(\log (\log g))$, the number of $D\in \mathcal{H}_{2g+1}$ such that
\[\frac{h_D}{q^g}<\frac{\zeta_{\F_q[T]}(2)}{e^\gamma \tau}\]
is given by 
\begin{equation}\label{eq:Allysa}
(\#\mathcal{H}_{2g+1})\exp\left (-C_1(q^{\{\log\kappa(\tau)\}})\frac{q^{\tau-C_0(q^{\{\log \kappa(\tau)\}})}}{\tau}\left(1+O\left(\frac{\log \tau}{\tau} \right)\right)\right).
\end{equation}
\end{thm}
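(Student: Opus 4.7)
The statement is Lumley's deep large-deviation asymptotic for $L(1,\chi_D)$ over the hyperelliptic ensemble $\mathcal{H}_{2g+1}$, disguised via the class number formula. I sketch how I would attack it, following the Granville--Soundararajan philosophy of modelling $L$-values by random Euler products, adapted to function fields.

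First, I would convert the class number to an $L$-value. Via the functional equation $L(s,\chi_D) = q^{g(1-2s)}L(1-s,\chi_D)$ together with $L(0,\chi_D) = L_K(1) = h_D$, one gets
\[\frac{h_D}{q^g} = L(1,\chi_D),\]
so the condition $h_D/q^g < \zeta_{\F_q[T]}(2)/(e^\gamma \tau)$ is literally an upper bound on $L(1,\chi_D)$. The scale $\zeta_{\F_q[T]}(2)/e^\gamma$ is the characteristic one coming from Mertens' theorem in $\F_q[T]$; extremely small values of $L(1,\chi_D)$ correspond to $\chi_D(P) = -1$ for many short irreducible $P$, i.e.\ the character is systematically inert on primes of small degree.

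Second, I would approximate $L(1,\chi_D)$ by a truncated Euler product and replace the truncation by a random model. Writing
\[\log L(1,\chi_D) = \sum_{\deg P \leq y} \log\left(1 - \frac{\chi_D(P)}{|P|}\right)^{-1} + \text{tail},\]
for an appropriately chosen $y = y(\tau,g)$, one controls the tail using the Weil bounds on $L_K(u)$. For $y$ much smaller than $g$, the short-prime values $\chi_D(P)$ equidistribute in $\{\pm 1\}$ as $D$ varies over $\mathcal{H}_{2g+1}$ and are approximately independent across distinct $P$ (here the hypothesis $q \equiv 1 \pmod{4}$ trivialises quadratic reciprocity and keeps the model clean), so the truncated log is well modelled by a weighted sum of independent random signs.

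Third, I would solve the resulting large-deviation problem by a saddle point / Laplace transform computation on the moment generating function, and transfer back to the arithmetic side by a moment comparison. Explicitly, one computes $\sum_{D \in \mathcal{H}_{2g+1}} L(1,\chi_D)^{-k}$ and shows that it matches the random model's $k$-th inverse moment, up to admissible error, for $k$ below a threshold growing with $g$; inserting the matched moments into the Laplace inversion produces the exponential rate $q^\tau/\tau$. The oscillation in $\{\log\kappa(\tau)\}$ encoded in $C_0, C_1$ is a genuinely function field effect: prime degrees in $\F_q[T]$ live on $\Z_{\geq 1}$, so the saddle point picks up a discrete contribution that does not average out, unlike in the number field analogue. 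The main obstacle is precisely this moment comparison: pushing it deep enough into the tail to reach $\tau$ as large as $\log g - 2\log\log g - \log\log\log g$ is at the edge of current technology and requires both sharp negative moment bounds over $\mathcal{H}_{2g+1}$ and a careful treatment of an exceptional set of $D$ on which $L(1,\chi_D)$ is pathologically small.
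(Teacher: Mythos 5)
This statement is not proved in the paper at all: it is quoted verbatim from Lumley \cite{Lumley}*{Corollary 1.8} and used as a black box, so there is no internal proof to compare against. Your preliminary reduction is correct and consistent with how the paper later uses the result: the functional equation $L(s,\chi_D)=q^{g(1-2s)}L(1-s,\chi_D)$ together with $L(0,\chi_D)=L_K(1)=h_D$ gives $h_D/q^g=L(1,\chi_D)$ (the paper records the same identity as $L_K(q^{-1})=h_Kq^{-g}$), so the class-number condition is indeed an upper-tail/lower-tail statement about $L(1,\chi_D)$. Your outline — truncated Euler product, random model for $\chi_D(P)$ on low-degree primes, complex/negative moments matched to the model, saddle-point inversion, with the $q^{\{\log\kappa(\tau)\}}$ oscillation coming from the discreteness of prime degrees — is a faithful description of the strategy Lumley actually follows, in the Granville--Soundararajan tradition.

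However, as a proof of the stated asymptotic it has a genuine gap, and you essentially concede it yourself in the last sentence: every quantitative step is deferred. The statement is not merely that the proportion of such $D$ is $\exp(-q^{\tau}(1+o(1))/\tau)$ in some vague sense; it asserts a specific rate with explicit functions $C_0$, $C_1$, $\kappa$, uniformly for $\tau$ up to $\log g-2\log\log g-\log\log\log g$. To get that, one must (i) prove the complex-moment asymptotics $\sum_{D\in\mathcal{H}_{2g+1}}L(1,\chi_D)^{z}$ with error terms uniform in $|z|$ up to roughly $q^{\tau}$ in the stated range of $\tau$, (ii) control the exceptional set where the Euler-product truncation fails, and (iii) carry out the saddle-point analysis far enough to extract the secondary constants and the $O(\log\tau/\tau)$ error in the exponent. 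None of this is done in the proposal, and it is precisely the content of Lumley's paper. So the correct disposition is: the reduction to $L(1,\chi_D)$ and the identification of the method are right, but the statement should be cited, not claimed as proved by this sketch.
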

Above, we have written $h_D$ instead of $h_{\F_q(T)(\sqrt{D})}$ for short, and $\gamma$ denotes the Euler–-Mascheroni constant. We will not discuss $\kappa$, $C_0$, and $C_1$. It suffices to say that $C_0(q^{\{\log\kappa(\tau)\}})$ and $C_1(q^{\{\log\kappa(\tau)\}})$ are positive functions depending on $\tau$. 

\begin{thm} Let $B>0$. Then  $(q,1,B)$ does not satisfy the Northcott property.
\end{thm}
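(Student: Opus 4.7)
The plan is to translate the condition $|\zeta_K^*(1)|\le B$ into a bound on $h_D/q^g$ for a quadratic $K=\F_q(T)(\sqrt{D})$, and then apply Lumley's theorem to produce infinitely many such $D$.

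First I would compute $\zeta_K^*(1)$ in closed form. Using the formula $\zeta_K(s)=L_K(q^{-s})/((1-q^{-s})(1-q^{1-s}))$ and the expansion $1-q^{1-s}=(s-1)\log q+O((s-1)^2)$ near $s=1$, one obtains
\[
\zeta_K^*(1)=\lim_{s\to 1}(s-1)\zeta_K(s)=\frac{L_K(q^{-1})}{(1-q^{-1})\log q}.
\]
The functional equation $\xi_K(1-s)=\xi_K(s)$ translates into the polynomial identity $L_K(u)=q^g u^{2g}L_K\!\left(\tfrac{1}{qu}\right)$. Evaluating at $u=q^{-1}$ and using $L_K(1)=h_K$ gives $L_K(q^{-1})=q^{-g}h_K$, so
\[
\zeta_K^*(1)=\frac{h_K}{q^{g-1}(q-1)\log q}.
\]
Therefore, for a quadratic extension $K=\F_q(T)(\sqrt{D})$ with $D\in\mathcal{H}_{2g+1}$, the condition $[K]\in S_{q,1,B}$ is equivalent to
\[
\frac{h_D}{q^g}\le \frac{(q-1)B\log q}{q}.
\]

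Next, I would invoke Lumley's theorem. Given $B>0$, fix $\tau$ large enough that $\zeta_{\F_q[T]}(2)/(e^\gamma\tau)\le (q-1)B\log q/q$. For every $g$ large enough so that $\tau\le \log g-2\log\log g-\log\log\log g$, Lumley's estimate \eqref{eq:Allysa} applies and gives at least
\[
(\#\mathcal{H}_{2g+1})\,\exp\!\left(-C_1\bigl(q^{\{\log\kappa(\tau)\}}\bigr)\frac{q^{\tau-C_0(q^{\{\log\kappa(\tau)\}})}}{\tau}\bigl(1+o(1)\bigr)\right)
\]
such $D$. Since $\tau$ is fixed and $\#\mathcal{H}_{2g+1}\sim (1-1/q)q^{2g+1}$, this count tends to infinity with $g$; in particular it is at least $1$ for all sufficiently large $g$.

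Finally, I would note that distinct squarefree monic $D\in\mathcal{H}_{2g+1}$ yield non-isomorphic fields $\F_q(T)(\sqrt{D})$ up to a finite ambiguity coming from automorphisms of $\F_q(T)$; but even ignoring that, the genus $g$ is an isomorphism invariant of $K$, so producing a valid $D$ for each of infinitely many values of $g$ immediately furnishes infinitely many isomorphism classes $[K]\in S_{q,1,B}$, proving the failure of the Northcott property. The only nontrivial step is the algebraic identification of $\zeta_K^*(1)$ with $h_K/q^g$ (up to explicit constants); once this is in hand, Lumley's quantitative theorem does all the heavy lifting essentially for free.
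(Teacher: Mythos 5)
Your proof is correct and follows essentially the same route as the paper: identify $\zeta_K^*(1)=\frac{h_K q^{-g}}{(1-q^{-1})\log q}$ via the functional equation $L_K(q^{-1})=q^{-g}h_K$, fix $\tau$ large enough in terms of $B$, and apply Lumley's theorem for each sufficiently large $g$ to produce at least one admissible $D\in\mathcal{H}_{2g+1}$, with genus serving as the isomorphism invariant that guarantees infinitely many classes. The only difference is cosmetic bookkeeping in the constant relating $B$ to $\tau$.
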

\begin{proof}
Given $B>0$, fix  $\tau$ large enough such that \[\frac{\zeta_{\F_q[T]}(2)}{e^\gamma \tau}\cdot \frac{q}{(1-q^{-1})\log q}<B.\] Since $\tau$ is fixed, we can evaluate the exponential factor in \eqref{eq:Allysa} and it gives a fixed positive constant $c(\tau)$ (that can be very small). 

We have that (see for example, \cite{Rosen}*{Proposition 2.3})\begin{equation} \label{eq:H}
\#\mathcal{H}_{n}=\begin{cases}q^{n} (1-q^{-1}) & n \geq 2,\\
q^n & n=0,1.
\end{cases}
\end{equation}

Applying this, we get that for $g$ large enough (so that $\tau$ satisfies the right conditions)  there are at least 
\[q^{2g+1} (1-q^{-1}) c(\tau)\]
possible $D \in \mathcal{H}_{2g+1}$ satisfying that
\[\frac{h_Dq^{-g}}{(1-q^{-1})\log q}<B.\]
We will combine this with the fact that 
\[\zeta_K^*(1)=\lim_{s\rightarrow 1} \frac{(s-1)L_K(q^{-s})}{(1-q^{-s})(1-q^{1-s})}=\frac{L_K(q^{-1})}{1-q^{-1}}\lim_{s\rightarrow 0}\frac{s-1}{1-q^{1-s}}=\frac{L_K(q^{-1})}{(1-q^{-1})\log q}=\frac{h_Kq^{-g}}{(1-q^{-1})\log q},\]
where the equality $L_K(q^{-1})=h_kq^{-g}$ follows from the functional equation \eqref{functeq}. 

Finally, we obtain
\[\zeta_{K}^*(1)<B.\]

Letting $g \rightarrow \infty$, we have that $S_{q, 1,B}$ is infinite for any choice of $B>0$. 
\end{proof}

\subsection{The segment of the real line  inside the right side of the critical line}
For $1/2<\sigma <1$ we use another result of Lumley that is very similar to the result we had at the pole $s=1$. This corresponds to the red segment in Figure \ref{fig:covering}.

\begin{thm}\cite{Lumley2}*{Theorem 1.3, partial statement}
Let $N$ be large and $1/2 < \sigma < 1$ be fixed. There exist a constant $\beta_q(\sigma)>0$ and an irreducible polynomial $P$ of degree $N$, such that
\begin{align*}
     \log(L(\sigma,\chi_P)) \leq -\beta_q(\sigma)\frac{(\log_q |P|)^{1-\sigma}}{(\log_q\log_q |P|)^\sigma}. 
\end{align*}
\end{thm}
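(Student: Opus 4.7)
The plan is to establish the existence of an irreducible $P$ of degree $N$ for which the quadratic $L$-function is as small as claimed. This is a one-sided extreme-value statement for $L(\sigma,\chi_P)$ in the critical strip, and the natural approach adapts the resonance/moment method of Soundararajan (as developed in the number field setting by Granville--Soundararajan and Lamzouri) to the function field case.

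First, I would take the logarithm of the Euler product. Since $\sigma > 1/2$, the contributions from $k \geq 2$ in
\[
\log L(\sigma, \chi_P) = \sum_Q \sum_{k \geq 1} \frac{\chi_P(Q)^k}{k\,|Q|^{k\sigma}}
\]
sum to an $O(1)$ quantity uniformly in $P$, so it suffices to make the $k=1$ sum very negative. Truncate it at a length $y$ and write
\[
\sum_{Q} \frac{\chi_P(Q)}{|Q|^\sigma} \;=\; S_y(P) + T_y(P),
\]
where $S_y$ runs over monic irreducibles of degree at most $y$. The eventual choice $y \asymp \log_q N / \log_q\log_q N$ is what produces the precise shape of the bound.

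Next I would handle $S_y(P)$ and $T_y(P)$ separately. The tail $T_y(P)$ is controlled in $2k$-th mean over the family of irreducible $P$ of degree $N$: using orthogonality for quadratic characters in $\F_q[T]$ — clean under $q \equiv 1 \pmod 4$, since quadratic reciprocity becomes trivial — one shows that $T_y(P)$ is negligible compared to the target $y^{1-\sigma}/(\log y)^\sigma$ for all but a small proportion of $P$. For $S_y(P)$, use reciprocity to rewrite $\chi_P(Q) = \chi_Q(P)$; as $P$ varies over irreducibles of degree $N$, the vector $(\chi_Q(P))_{\deg Q \leq y}$ approximates an independent Rademacher vector, and a Chebyshev-type high-moment large-deviation estimate produces (in fact a positive proportion of) $P$ satisfying $S_y(P) \leq -\beta_q(\sigma)\, y^{1-\sigma}/(\log y)^\sigma$ for a suitable $\beta_q(\sigma) > 0$. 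Combined with the tail control, this gives the desired irreducible $P$.

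The main obstacle is the joint optimization of truncation and moment order. The cutoff $y$ must be small enough that the independence approximation for $S_y$ holds and extreme values are forced out, yet large enough that $T_y$ can be controlled by moment methods; simultaneously, the moment order $k$ must stay in a range where moments of quadratic $L$-functions in the family $\{\chi_P : \deg P = N\}$ admit rigorous asymptotics. Balancing these three constraints is exactly what pins down the exponent $(\log_q |P|)^{1-\sigma}/(\log_q\log_q |P|)^\sigma$ in the conclusion, and it is also the reason the constant $\beta_q(\sigma)$ is not explicit.
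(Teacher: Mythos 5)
This statement is not proved in the paper at all: it is quoted verbatim from Lumley \cite{Lumley2}*{Theorem 1.3} and used as a black box, so your proposal can only be judged as a sketch of Lumley's (Lamzouri-style, distribution-of-values) argument. Your overall strategy is in the right family of techniques, but two of your quantitative claims are wrong in ways that would sink the argument as written. First, the truncation length. With $y\asymp \log_q N/\log_q\log_q N$ the number of monic irreducibles of degree at most $y$ is about $q^{y}/y=N^{o(1)}$, so even in the best case where $\chi_P(Q)=-1$ for every such $Q$ one gets $|S_y(P)|\leq \sum_{n\leq y}\pi_q(n)q^{-n\sigma}\asymp q^{y(1-\sigma)}/y=N^{o(1)}$, which is far smaller than the target $N^{1-\sigma}/(\log_q N)^{\sigma}$ (note $\log_q|P|=N$, not $\log_q N$). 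The correct choice is $y\approx \log_q N+\log_q\log_q N$, so that $q^{y}\asymp N\log_q N$ and the entropy cost $2^{-q^y/y}$ of forcing all signs negative is comparable to the reciprocal of the family size $q^N/N$; only then does $q^{y(1-\sigma)}/y$ match the claimed extreme value. Second, the assertion that a \emph{positive proportion} of $P$ satisfy the extremal lower bound is false: characters achieving deviations of size $N^{1-\sigma}/(\log_q N)^{\sigma}$ are exponentially rare, and a Chebyshev bound from moments of bounded order cannot detect them. One needs either Soundararajan's resonance method or, as Lumley does, asymptotics for the Laplace transform $\frac{1}{\#\mathcal F}\sum_{P}\exp\bigl(z\log L(\sigma,\chi_P)\bigr)$ uniformly for $|z|$ up to roughly $N^{\sigma}$, from which the lower tail of the distribution function is extracted and shown to be nonempty at the claimed threshold.

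A further gap: for $1/2<\sigma<1$ the Dirichlet series $\sum_Q \chi_P(Q)|Q|^{-\sigma}$ does not converge, so you cannot simply ``truncate it at length $y$'' and call the rest a tail. You must first prove that $\log L(\sigma,\chi_P)$ is well approximated by a short sum over prime powers, which over function fields is done via the explicit formula together with the Riemann Hypothesis for curves (available unconditionally here, but it has to be invoked, and the error from the zeros controlled). Finally, note that the family in the statement consists of \emph{irreducible} $P$ of degree exactly $N$, so your orthogonality and mean-value estimates must be carried out over irreducible moduli (via reciprocity and equidistribution of $\chi_Q(P)$ over irreducibles of degree $N$, i.e.\ Weil bounds), not over the squarefree family $\mathcal H_{2g+1}$ for which the moment literature is most developed. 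None of these obstacles is fatal to the general approach, but as stated your parameters do not reach the bound in the theorem.
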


With this, we can prove the following. 
\begin{thm} Let $B>0$ and $1/2<\sigma<1$. Then  $(q,\sigma,B)$ does not satisfy the Northcott property.
\end{thm}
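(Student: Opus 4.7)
The plan is to exhibit an infinite sequence of pairwise non-isomorphic quadratic function fields whose zeta values at $\sigma$ tend to zero; this immediately contradicts the Northcott property at $(q,\sigma,B)$ for arbitrary $B>0$. The construction mirrors the one used in Proposition \ref{prop:hyperelliptic}: for each sufficiently large integer $N$, apply the Lumley theorem just quoted to choose a monic irreducible polynomial $P_N\in \F_q[T]$ of degree $N$ satisfying
$$\log L(\sigma,\chi_{P_N}) \leq -\beta_q(\sigma)\frac{N^{1-\sigma}}{(\log_q N)^{\sigma}},$$
and set $K_N := \F_q(T)(\sqrt{P_N})$.

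Next, I would verify that the $K_N$ produce infinitely many isomorphism classes. Since $P_N$ is monic and irreducible of positive degree, it is squarefree with square leading coefficient, so $K_N$ is a genuine quadratic extension of $\F_q(T)$ with constant field exactly $\F_q$, and its genus grows linearly with $N$ (it equals $\lfloor (N-1)/2\rfloor$). Hence the $[K_N]$ are pairwise distinct for infinitely many $N$, and the remaining task is purely analytic.

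Finally, combine the factorization \eqref{eq:quadratic} with Lumley's bound. For $\sigma\in (1/2,1)$ the denominator $(1-q^{-\sigma})(1-q^{1-\sigma})$ is a nonzero constant, and by the Riemann hypothesis for $L(s,\chi_{P_N})$ the numerator does not vanish at $s=\sigma$; consequently $\zeta_{K_N}^*(\sigma) = \zeta_{K_N}(\sigma)$ and
$$|\zeta_{K_N}^*(\sigma)| = \frac{|L(\sigma,\chi_{P_N})|}{|(1-q^{-\sigma})(1-q^{1-\sigma})|} \leq \frac{\exp\!\bigl(-\beta_q(\sigma) N^{1-\sigma}/(\log_q N)^{\sigma}\bigr)}{|(1-q^{-\sigma})(1-q^{1-\sigma})|}.$$
The right-hand side tends to $0$ as $N\to \infty$, so for any given $B>0$ we have $[K_N]\in S_{q,\sigma,B}$ for all $N$ sufficiently large, showing that $S_{q,\sigma,B}$ is infinite.

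There is no substantial obstacle beyond Lumley's theorem itself: once it is available, the remainder is bookkeeping (constant-field check, genus growth, and the Riemann-hypothesis remark needed to equate $\zeta_K^*(\sigma)$ with $\zeta_K(\sigma)$). The mildest subtlety is that $\sigma$ lies inside the critical strip, so one must confirm that $L(\sigma,\chi_{P_N})\neq 0$ before reading the bound on $\log L$ as a bound on $|L|$; this is precisely what RH for $L(s,\chi_{P_N})$ provides, since all zeros sit on $\re(s)=1/2$.
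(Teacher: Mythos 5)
Your proposal is correct and follows essentially the same route as the paper: apply Lumley's theorem to get irreducible $P$ of each large degree $N$ with $L(\sigma,\chi_P)$ small, pass to the quadratic fields $\F_q(T)(\sqrt{P})$, and use the growing genus to conclude there are infinitely many isomorphism classes in $S_{q,\sigma,B}$. Your extra remarks (constant-field check, genus formula, and invoking RH to equate $\zeta_K^*(\sigma)$ with $\zeta_K(\sigma)$) are correct bookkeeping that the paper leaves implicit.
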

\begin{proof}
Given $B>0$ and $\sigma \in (\frac{1}{2},1)$ we can choose $N$ such that

\begin{align*}
    |\zeta_{K_P}(\sigma)| \leq \frac{1}{|(1-q^{-\sigma})(1-q^{1-\sigma})|}e^{-\beta_q(\sigma)\frac{(\log_q |P|)^{1-\sigma}}{(\log_q\log_q |P|)^\sigma}} \leq B.
\end{align*}
Then, we can construct a sequence of irreducible polynomials $P_k$ where $P_0$ is a polynomial of degree $N$ and $P_k$ is of degree $N+k$ such that for all the polynomials in the sequence 
$$ |\zeta_{K_{P_k}}(\sigma)| \leq B.$$

Thus, we see that $S_{q,\sigma, B}$ is infinite for any choice of $B>0$. 

\end{proof}

\subsection{The Northcott property at $s=1/2$}
Now we consider the case of $s=1/2$, more precisely, we look at $\zeta_K(1/2)$. For this case, we use the following result of Li. 
\begin{thm}\cite{Li}*{Theorem 1.3, simplified version} \label{thm:wanlin} For any $\varepsilon>0$ there exist nonzero constants $B_\varepsilon$ and $N_\varepsilon$ such that if $N>N_\varepsilon$,
\[\#\{ D \in \F_q[T]: D \, \text{monic},\, \text{square-free}, |D|<N, {\textstyle L(\frac{1}{2}, \chi_D)}=0\} \geq B_\varepsilon N^{1/5-\varepsilon}.\]
\end{thm}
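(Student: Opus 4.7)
The plan is to reduce $L(\frac{1}{2},\chi_D)=0$ to a geometric condition on the hyperelliptic curve $C_D:y^2=D(x)$ and then construct a large family of $D$ inheriting that condition via a pullback.

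For $D\in\F_q[T]$ monic squarefree of degree $2g+1$, the $L$-function $L(s,\chi_D)$ equals the numerator $L_{C_D}(q^{-s})$ of the zeta function of $C_D/\F_q$, which factors as $\prod_{i=1}^{2g}(1-\alpha_i q^{-s})$ with $|\alpha_i|=\sqrt{q}$. Thus $L(\frac{1}{2},\chi_D)=0$ exactly when $\sqrt{q}$ is a Frobenius eigenvalue on $H^1(C_D)$, equivalently when $J(C_D)$ has an isogeny factor whose Frobenius eigenvalues include $\sqrt{q}$. A determinant argument rules out this eigenvalue appearing for abelian varieties of dimension $1$ over $\F_q$ when $q$ is not a perfect square, so the smallest hyperelliptic curve for which the condition can hold has genus $g=2$, i.e.\ $\deg D=5$. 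This is what will ultimately produce the exponent $1/5$.

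I would fix a \emph{seed} $D_0\in\F_q[T]$ monic squarefree of degree $5$ with $L(\frac{1}{2},\chi_{D_0})=0$; the existence of such a $D_0$ for $q\equiv 1\pmod{4}$ is the first non-trivial input, handled either via an explicit construction or by an averaging input showing that a positive proportion of genus-$2$ hyperelliptic $L$-functions vanish at the center. Having fixed $D_0$, for each polynomial $h(x)\in\F_q[T]$ let $D(x)$ be the squarefree kernel of $D_0(h(x))$. The curve $y^2=D_0(h(x))$ admits the morphism $(x,y)\mapsto(h(x),y)$ onto $C_{D_0}$, so $J(C_{D_0})$ appears as an isogeny factor of the new Jacobian and the Frobenius eigenvalue $\sqrt{q}$ persists in the larger curve, yielding $L(\frac{1}{2},\chi_D)=0$.

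Letting $h$ range over monic polynomials of degree $n$ produces $\sim q^n$ candidates with $|D|\leq q^{5n}$, hence $\sim N^{1/5}$ examples for $N=q^{5n}$, matching the exponent in the theorem. The $\varepsilon$ absorbs the losses from two delicate ingredients that I expect to be the main obstacles: first, ensuring that distinct $h$ produce distinct squarefree $D$, which requires bounding the fibres of the map $h\mapsto D$ using automorphisms of $C_{D_0}$ and of $\mathbb{A}^1$; and second, guaranteeing that the squarefree kernel of $D_0(h(x))$ does not lose too much degree, which requires controlling how often the critical values of $h$ coincide with roots of $D_0$---a generic but subtle transversality condition that ultimately determines the admissible $\varepsilon$.
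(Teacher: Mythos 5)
The paper does not actually prove this statement---it is imported wholesale from \cite{Li}---but your proposal reconstructs essentially the strategy of Li's proof: central vanishing is equivalent to $\sqrt{q}$ being a Frobenius eigenvalue of $C_D$, a genus-$2$ seed is the smallest possible (dimension $1$ being ruled out for non-square $q$ by the integrality of the trace), and the vanishing propagates to the squarefree part of $D_0\circ h$ via the cover $(x,y)\mapsto (h(x),y)$, giving $\asymp q^{\deg h}=N^{1/5}$ examples once squarefreeness and distinctness are controlled. The one substantive input you leave as a black box is the existence of the seed, i.e.\ that the supersingular isogeny class with Weil number $\sqrt{q}$ (supplied by Honda--Tate) is actually realized inside the Jacobian of a hyperelliptic curve with a suitable affine model; this is precisely the Honda--Tate-plus-Gabber mechanism that the paper itself invokes immediately afterwards to handle general Weil integers, so your sketch is consistent with, and essentially identical to, the cited argument.
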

The above result immediately implies the following. 
\begin{thm}\label{thm:1/2} Let $B>0$. Then  $(q,1/2,B)$ does not satisfy the Northcott property for $\zeta_K(1/2)$.
\end{thm}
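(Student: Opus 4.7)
The plan is to feed Theorem \ref{thm:wanlin} directly into the quadratic extension factorization \eqref{eq:quadratic}. For $K_D = \F_q(T)(\sqrt{D})$ with $D$ monic and square-free, evaluation at $s = 1/2$ gives
\[\zeta_{K_D}(1/2) = \frac{L(1/2,\chi_D)}{(1-q^{-1/2})(1-q^{1/2})},\]
and the denominator is a fixed nonzero constant depending only on $q$. Hence any $D$ with $L(1/2,\chi_D) = 0$ yields a function field $K_D$ with $\zeta_{K_D}(1/2) = 0$, so $|\zeta_{K_D}(1/2)| \leq B$ holds trivially for every $B > 0$.

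Next I would invoke Theorem \ref{thm:wanlin}: for any fixed $\varepsilon > 0$ and all $N$ sufficiently large, there are at least $B_\varepsilon N^{1/5-\varepsilon}$ monic square-free $D \in \F_q[T]$ with $|D| < N$ such that $L(1/2,\chi_D) = 0$; denote this set $\mathcal{D}(N)$. Every element of the family $\{K_D : D \in \mathcal{D}(N)\}$ lies in $S_{q,1/2,B}$ for every $B > 0$, so it only remains to extract infinitely many isomorphism classes from this family as $N \to \infty$.

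For that last step I would appeal to Lemma \ref{lem:finitegenus}: for each fixed genus $g$ there are only finitely many isomorphism classes of function fields over $\F_q$. If the set $\{[K_D] : D \in \bigcup_N \mathcal{D}(N)\}$ were finite, the genera of these fields would be uniformly bounded, hence the degrees of the corresponding $D$ would be uniformly bounded, contradicting $\#\mathcal{D}(N) \to \infty$ together with the trivial observation that only $O(q^d)$ monic square-free polynomials have degree at most $d$.

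There is essentially no obstacle here: Theorem \ref{thm:wanlin} does all the heavy lifting, and the proof is a direct translation of a vanishing statement about $L(1/2,\chi_D)$ into a vanishing statement about $\zeta_K(1/2)$. The only conceptual point worth emphasizing, and the reason this argument cannot be made to say anything about $\zeta_K^*(1/2)$, is that the theorem is phrased for $\zeta_K(1/2)$ itself, which genuinely vanishes at these $K$, whereas the leading Taylor coefficient $\zeta_K^*(1/2)$ need not be small and in fact would escape the present method entirely.
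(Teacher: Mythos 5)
Your argument is correct and follows the same route as the paper: Theorem \ref{thm:wanlin} supplies infinitely many square-free $D$ with $L(\frac{1}{2},\chi_D)=0$, hence $\zeta_{K_D}(1/2)=0$ via \eqref{eq:quadratic}, and these give infinitely many elements of $S_{q,1/2,B}$. The paper states this in one line; your additional step passing from infinitely many $D$ to infinitely many isomorphism classes $[K_D]$ (via bounded genus forcing bounded $\deg D$) is a worthwhile detail the paper leaves implicit.
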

\begin{proof}
By Theorem \ref{thm:wanlin}, there are infinitely many $K$ for which $|\zeta_K(1/2)|=0$ and therefore we obtain infinitely many $K$ such that $|\zeta_K(1/2)|<B$.
\end{proof}

\begin{rem} The above result does not cover the case of $\zeta_K^*(1/2)$. To do this, we would have to consider the first nonzero coefficient of the Taylor series for $\zeta_K(s)$ around $s=1/2$. 
\end{rem}

Theorem \ref{thm:1/2}  can be extended to a more general class of $s$. An algebraic integer $\alpha$ is called a \textbf{Weil integer} if $|\alpha|=\sqrt{q}$ under every complex embedding.

\begin{thm}
For any $B>0$ and $q^s$ a Weil integer, the triple $(q,s,B)$ does not satisfy the Northcott property for $\zeta_K(1/2)$.
\end{thm}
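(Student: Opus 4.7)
My plan is to follow the strategy of Theorem \ref{thm:1/2} and exhibit infinitely many function fields $K/\F_q$ for which $\zeta_K(s)$ actually vanishes, so that $|\zeta_K(s)|=0<B$ holds automatically for every $B>0$. The first observation is that the hypothesis $|q^s|=\sqrt{q}$ forces $\re(s)=1/2$, placing $s$ on the critical line. Writing $\alpha:=q^s$, the factorisations \eqref{eq:L-factor} and \eqref{eq:zetadefn} show that $\zeta_K(s)=0$ is equivalent to $\alpha=\pi_j$ for some Frobenius eigenvalue $\pi_j$ of $K$, so the problem reduces to constructing infinitely many isomorphism classes $[K]$ whose list of Frobenius roots contains $\alpha$.

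To build one such field, I would invoke Honda--Tate theory: the Weil $q$-integer $\alpha$ determines a simple abelian variety $A_0/\F_q$ whose Frobenius eigenvalues are precisely the $\Gal(\overline{\Q}/\Q)$-conjugates of $\alpha$ (each with the same multiplicity). I would then appeal to the classical fact that every abelian variety over any field is a quotient of the Jacobian of some smooth projective curve defined over that same field: concretely, fix a very ample embedding $A_0\hookrightarrow \PP^N_{\F_q}$ and extract a smooth curve $C_0\subset A_0$ defined over $\F_q$ by iterated hyperplane sections (using Poonen's Bertini theorem over finite fields to guarantee smoothness). The Albanese map then yields a surjection $J(C_0)\twoheadrightarrow A_0$, so the Frobenius eigenvalues of $A_0$ embed into those of $C_0$; in particular $\alpha$ is a Frobenius root of $C_0$ and $\zeta_{K(C_0)}(s)=0$.

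Once $C_0$ is in hand, infinitely many further examples arise as covers. For each $n\geq 1$, I would choose a separable cover $\pi_n\colon C_n\to C_0$ ramified along a divisor of degree at least $n$ (for example, Kummer or Artin--Schreier double covers obtained by adjoining the square root or Artin--Schreier root of a function on $C_0$). Pullback of divisors provides a surjection $J(C_n)\twoheadrightarrow J(C_0)\twoheadrightarrow A_0$, so $\alpha$ remains a Frobenius root of $C_n$ and $\zeta_{K(C_n)}(s)=0$. Riemann--Hurwitz forces $g(C_n)\to\infty$, and Lemma \ref{lem:finitegenus} then guarantees that the classes $[K(C_n)]$ are eventually all distinct, producing an infinite family inside $S_{q,s,B}$ for every $B>0$.

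The main technical obstacle is producing $C_0$ over the small field $\F_q$ rather than merely over $\overline{\F_q}$: over an infinite ground field a generic linear section of $A_0$ does the job immediately, but over a finite field one must either invoke the finite-field Bertini theorem of Poonen or cite the general result (see e.g.\ Milne's \emph{Jacobian Varieties}) that every abelian variety is dominated by a Jacobian already over the ground field. All remaining steps (Honda--Tate, eigenvalue propagation through surjections of Jacobians, growth of genera) are essentially formal.
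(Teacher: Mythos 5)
Your argument is correct and follows essentially the same route as the paper: Honda--Tate produces an abelian variety $A_0/\F_q$ with Frobenius eigenvalue $q^s$, a curve over $\F_q$ whose Jacobian dominates $A_0$ yields one field with $\zeta_K(s)=0$, and covers of growing genus combined with Lemma \ref{lem:finitegenus} give infinitely many. The only differences are that the paper cites Gabber for the existence of the dominating Jacobian where you sketch a Bertini-type construction, and that the paper invokes the general fact that $\zeta_L(s)=0$ for every constant-field-preserving extension $L/K$ where you build explicit Kummer/Artin--Schreier covers (minor quibble: it is the pushforward $\pi_{n*}$, not the pullback, that gives the surjection $J(C_n)\twoheadrightarrow J(C_0)$, though either direction suffices by Poincar\'e reducibility).
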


\begin{proof}
The statement follows from the fact that for any Weil integer $q^s$, there exist infinitely many function fields $K$ such that $\zeta_{K}(s)=0$.

By the theory of Honda--Tate, for every Weil integer $q^s$, there exists an abelian variety $A/\mathbb{F}_q$ such that $q^s$ is a Frobenius eigenvalue for $A$. By the work of Gabber \cite{Gabber}*{Corollay 2.5}, for any abelian variety $A/\mathbb{F}_q$, there exists a smooth projective curve $C/\mathbb{F}_q$ such that $A$ is an isogeny factor of the Jacobian of $C$ (see also \cite{BL} for an effective statement). Hence $\zeta_{K_C}(s)=0$ where $K_C$ is the function field of $C$. The theorem follows from the fact that $\zeta_L(s)=0$ for any field $L$ which is an extension of $K$ with constant field $\mathbb{F}_q$.
\end{proof}

\subsection{The Northcott property in the right of the critical line}
Here we consider the set $1/2< \re(s)< 1$. This case will be studied in the context of  the Shifted Moments Conjecture, formulated over function fields by Andrade and Keating \cite{Andrade-Keating-conj} and recently proven for products of up to three factors and $\re(s)<1$ by Bui, Florea, and Keating \cite{Bui-Florea-Keating}.

For simplicity of notation we will write $\zeta_q(s)$ instead of $\zeta_{\F_q[T]}(s)$. Here, as before, it is assumed that   $q\equiv 1 \pmod{4}$ for simplicity. 

We start by recalling a simplified version of one of the results of Bui, Florea, and Keating. 
\begin{thm}\label{thm:BFK}\cite{Bui-Florea-Keating}*{Theorem 1.2, simplified version of a particular case}
Let $\alpha_1, \alpha_2 \in \C$ such that $|\re(\alpha_j)|<1/2$. Denote $A=\{\alpha_1,\alpha_2\}$. For a set $\mathfrak{A}\subseteq A$, let $\mathfrak{A}^-=\{-\mathfrak{a}\, :\, \mathfrak{a} \in \mathfrak{A}\}$ and $q^{-2g \mathfrak{A}}=q^{-2g\sum_{\mathfrak{a} \in \mathfrak{A}} \beta}$.  We have
\[\frac{1}{\# \mathcal{H}_{2g+1}} \sum_{D \in \mathcal{H}_{2g+1}}  {\textstyle L(\frac{1}{2}+\alpha_1, \chi_D)}{\textstyle L(\frac{1}{2}+\alpha_2, \chi_D)}=\sum_{\mathfrak{A}\subseteq A} q^{-2g \mathfrak{A}} S_{(A \setminus \mathfrak{A})\cup \mathfrak{A}^-} +E_2,\]
 where if $C=\{\gamma_1,\gamma_2\}$, 
 \[S_C=\mathcal{A}_C(1)\prod_{1\leq i \leq j \leq 2}\zeta_q(1+\gamma_i+\gamma_j),\]
 \[\mathcal{A}_C(u)= \prod_{\substack{P\, \text{monic}\\ \text{irreducible}}} \prod_{1\leq i \leq j \leq 2}\left(1-\frac{u^{2\deg(P)}}{|P|^{1+\gamma_i+\gamma_j}}\right)\left(1+\left(1+\frac{1}{|P|}\right)^{-1}\sum_{\ell=1}^\infty \frac{\tau_C(P^{2\ell})}{|P|^\ell}u^{2\ell\deg(P)}\right),\]
\begin{equation}\label{defn:tau}
\tau_C(f)=\sum_{\substack{f=f_1f_2\\f_i \text{monic}}} \frac{1}{|f_1|^{\gamma_1}|f_2|^{\gamma_2}}
\end{equation}
 and, for $\varepsilon>0$,
 \[E_2\ll_\varepsilon q^{-(1+2\min\{|\re(\alpha_1)|, |\re(\alpha_2)|\})g+\varepsilon g}.\]
\end{thm}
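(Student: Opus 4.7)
The plan is to follow the standard recipe for shifted moments of quadratic Dirichlet $L$-functions over $\mathbb{F}_q[T]$, which at this level of generality is due to Bui, Florea, and Keating. The starting observation is that $L(s,\chi_D)$ is a polynomial of degree $2g$ in $q^{-s}$, so its functional equation gives an exact identity rather than an approximate one. This removes the subtle truncation errors that appear in analogous number field arguments and lets one open the product $L(\frac{1}{2}+\alpha_1,\chi_D)L(\frac{1}{2}+\alpha_2,\chi_D)$ as an honest finite Dirichlet sum in $f_1, f_2$.

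First I would apply the functional equation separately to each of the two factors, obtaining $2^2=4$ representations of the product indexed by subsets $\mathfrak{A}\subseteq A$; each representation replaces $\alpha_i$ by $-\alpha_i$ for $i\in \mathfrak{A}$ and introduces the weight $q^{-2g\mathfrak{A}}$ coming from the root number and conductor normalization. This matches precisely the subset structure on the right-hand side of the theorem, so after averaging in $D$ each piece should produce the corresponding $S_{(A\setminus \mathfrak{A})\cup \mathfrak{A}^-}$.

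Next I would average over $D\in \mathcal{H}_{2g+1}$ using orthogonality for $\chi_D(f_1 f_2)$. Under $q\equiv 1 \pmod 4$ quadratic reciprocity is essentially trivial, and the average splits cleanly into a diagonal contribution from $f_1 f_2$ being a square and an off-diagonal remainder. For the diagonal, pulling out the Euler product prime by prime produces exactly the three poles at $\gamma_i+\gamma_j=0$ (giving $\prod_{1\leq i \leq j \leq 2}\zeta_q(1+\gamma_i+\gamma_j)$) together with a convergent arithmetic factor that, after matching local terms against the definition \eqref{defn:tau}, is $\mathcal{A}_C(1)$.

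The hard part, as always, is the off-diagonal error $E_2$. The expected route is Poisson summation over the family $\mathcal{H}_{2g+1}$, which converts the $D$-sum into a dual sum of comparable length; one then estimates the resulting expression either directly, using bounds on Gauss sums and $L$-values at $1$, or through contour shifts in the associated multi-variable Dirichlet series. The exponent $-1+2\min|\re(\alpha_j)|$ in $E_2$ reflects square-root cancellation after Poisson plus the distance of the shifts from the critical line, and the hypothesis $|\re(\alpha_j)|<1/2$ is precisely what keeps this exponent negative so that $E_2$ is genuinely an error term. A fully self-contained proof here would reproduce the central technical content of \cite{Bui-Florea-Keating}, so in practice I would invoke their Theorem 1.2 at this point and deduce the stated form by specializing to two factors and absorbing the various $\varepsilon$'s into the final bound.
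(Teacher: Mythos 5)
The paper offers no proof of this statement at all: it is quoted verbatim (in simplified form) from Bui--Florea--Keating's Theorem 1.2, with the explicit error term read off from their proof, and your proposal likewise ends by invoking that same theorem after a recipe-style sketch. So you take essentially the same approach as the paper, namely citation of the external result; the surrounding heuristic (exact functional equation, diagonal versus off-diagonal after orthogonality, swap terms indexed by subsets $\mathfrak{A}\subseteq A$) is a fair description of how that reference proceeds and is consistent with the way the present paper later reuses those ingredients in its own Theorem on the line $\re(s)=1$.
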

Remark that the various sets of the form $(A \setminus \mathfrak{A})\cup \mathfrak{A}^-$ should be taken as multi-sets in the case where a parameter is repeated. Therefore, in our case, they always have cardinality 2.

We stress that the result of Bui, Florea, and Keating is much more general than Theorem \ref{thm:BFK}, as it considers a product of $k$ factors of the form ${\textstyle L(\frac{1}{2}+\alpha, \chi_D)}$ and it includes a twist by $\chi_D(h)$, where $h$ is a polynomial of degree very small compared to $g$. We have written a simplified version that is sufficient for our purposes. The error term that we give in Theorem \ref{thm:BFK} is more detailed than the term in the original statement in \cite{Bui-Florea-Keating} and has been taken from the proof.

\begin{thm} \label{thm:BLKconsequence}
 Let $0<\re(\alpha)<\frac{1}{2}$  and
\begin{align*}
B>&\left|\frac{1}{\left(1-q^{-\frac{1}{2}-\alpha}\right)\left(1-q^{\frac{1}{2}-\alpha}\right)}\right|\\&\times \prod_{\substack{P\, \text{monic}\\ \text{irreducible}}} \left[\frac{1}{2}\left( \left(1-\frac{1}{|P|^{\frac{1}{2}+\alpha}}\right)^{-1}\left(1-\frac{1}{|P|^{\frac{1}{2}+\overline{\alpha}}}\right)^{-1}+\left(1+\frac{1}{|P|^{\frac{1}{2}+\alpha}}\right)^{-1}\left(1+\frac{1}{|P|^{\frac{1}{2}+\overline{\alpha}}}\right)^{-1}\right)+\frac{1}{|P|}\right]^{1/2}\\
 &\times \left(1+\frac{1}{|P|}\right)^{-1/2}.
\end{align*}
Then $(q,1/2+\alpha,B)$ does not satisfy the Northcott property. 
 
\end{thm}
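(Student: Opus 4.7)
The plan is to apply Theorem \ref{thm:BFK} to compute the second moment
\[
\frac{1}{\#\mathcal{H}_{2g+1}}\sum_{D\in\mathcal{H}_{2g+1}}\bigl|L({\textstyle\frac{1}{2}+\alpha},\chi_D)\bigr|^2,
\]
and then extract, via Markov's inequality, infinitely many isomorphism classes of $K=\F_q(T)(\sqrt{D})$ (obtained by varying $g$) satisfying $|\zeta_K({\textstyle\frac{1}{2}+\alpha})|\leq B$. Since $\re({\textstyle\frac{1}{2}+\alpha})>{\textstyle\frac{1}{2}}$ avoids both the poles and zeros of $\zeta_K$, we have $\zeta_K^*({\textstyle\frac{1}{2}+\alpha})=\zeta_K({\textstyle\frac{1}{2}+\alpha})=L({\textstyle\frac{1}{2}+\alpha},\chi_D)/[(1-q^{-1/2-\alpha})(1-q^{1/2-\alpha})]$.

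Because $\chi_D$ is real, $|L({\textstyle\frac{1}{2}+\alpha},\chi_D)|^2 = L({\textstyle\frac{1}{2}+\alpha},\chi_D)L({\textstyle\frac{1}{2}+\overline{\alpha}},\chi_D)$, which is exactly the product handled by Theorem \ref{thm:BFK} with $\alpha_1=\alpha$, $\alpha_2=\overline{\alpha}$. Among the terms indexed by $\mathfrak{A}\subseteq A=\{\alpha,\overline{\alpha}\}$, only the $\mathfrak{A}=\emptyset$ term survives in the limit: the remaining three carry factors $|q^{-2g\mathfrak{A}}|\in\{q^{-2g\re(\alpha)},q^{-4g\re(\alpha)}\}$ that vanish as $g\to\infty$ since $\re(\alpha)>0$, and the error $E_2\ll q^{-(1+2\re(\alpha))g+\varepsilon g}$ is $o(1)$ for small $\varepsilon$. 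Hence the second moment converges to $S_A$.

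Next I would evaluate $S_A$ in closed form. The product $\prod_{i\leq j}(1-|P|^{-1-\gamma_i-\gamma_j})$ appearing in $\mathcal{A}_A(1)$ cancels the Euler factors of $\prod_{i\leq j}\zeta_q(1+\gamma_i+\gamma_j)$, leaving
\[
S_A=\prod_P\left(1+\frac{|P|}{|P|+1}\sum_{\ell=1}^\infty\frac{\tau_A(P^{2\ell})}{|P|^\ell}\right).
\]
Averaging the two geometric expansions $(1\mp|P|^{-1/2-\alpha})^{-1}(1\mp|P|^{-1/2-\overline{\alpha}})^{-1}$ selects the terms with even total exponent, giving
\[
\sum_{\ell=0}^\infty\frac{\tau_A(P^{2\ell})}{|P|^\ell}=\frac{1}{2}\!\left[\left(1-\frac{1}{|P|^{1/2+\alpha}}\right)^{-1}\!\!\left(1-\frac{1}{|P|^{1/2+\overline{\alpha}}}\right)^{-1}\!\!+\left(1+\frac{1}{|P|^{1/2+\alpha}}\right)^{-1}\!\!\left(1+\frac{1}{|P|^{1/2+\overline{\alpha}}}\right)^{-1}\right];
\]
splitting off the $\ell=0$ contribution (using $\tau_A(1)=1$) then yields
\[
S_A=\prod_P(1+1/|P|)^{-1}\left[\tfrac{1}{2}\bigl((\cdots)+(\cdots)\bigr)+1/|P|\right],
\]
so $\sqrt{S_A}$ coincides exactly with the Euler product appearing in the bound for $B$.

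Setting $B_0=\sqrt{S_A}/|(1-q^{-1/2-\alpha})(1-q^{1/2-\alpha})|$, fix any $B>B_0$ and choose $M>S_A$ with $\sqrt{M}/|(1-q^{-1/2-\alpha})(1-q^{1/2-\alpha})|<B$. By Markov's inequality, for $g$ large the fraction of $D\in\mathcal{H}_{2g+1}$ with $|L({\textstyle\frac{1}{2}+\alpha},\chi_D)|^2>M$ is at most $(S_A+o(1))/M<1$, so a positive proportion of $D$ satisfy $|\zeta_K({\textstyle\frac{1}{2}+\alpha})|\leq B$; letting $g\to\infty$ produces infinitely many isomorphism classes of distinct genus. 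The main obstacle is the explicit Euler product manipulation identifying $\sqrt{S_A}$ with the stated bound; a secondary subtlety arises when $\alpha$ is real, where $S_{\{\alpha,-\alpha\}}$ formally contains $\zeta_q(1)$ and requires the standard regularization of the shifted moment (noting that Theorem \ref{thm:main}$(e)$ already covers real $\alpha$).
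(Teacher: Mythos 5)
Your proposal is correct and follows essentially the same route as the paper: apply Theorem \ref{thm:BFK} with $\alpha_1=\alpha$, $\alpha_2=\overline{\alpha}$, isolate the $\mathfrak{A}=\emptyset$ term as the main term since $\re(\alpha)>0$, carry out the same Euler-product identity matching $\sqrt{S_A}$ with the stated constant, and then use an averaging argument (your Markov-inequality phrasing versus the paper's ``some $D$ lies below the mean'') to produce infinitely many $D$ of growing genus with $|\zeta_K(\tfrac12+\alpha)|\leq B$. The only cosmetic difference is that your version yields a positive proportion of such $D$ rather than merely one per genus, and your remark about the degenerate $\zeta_q(1)$ factor for real $\alpha$ is a reasonable caveat that the paper also glosses over.
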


\begin{proof}
 In Theorem  \ref{thm:BFK} we fix $\alpha=\alpha_1, \alpha_2=\overline{\alpha}$.
  Since $\re(\alpha)>0$,  the dominant term occurs when $\mathfrak{A}$ is the empty set, and we have  \begin{align}\label{eq:sum2}&\frac{1}{\# \mathcal{H}_{2g+1}} \sum_{D \in \mathcal{H}_{2g+1}}  |{\textstyle L(\frac{1}{2}+\alpha, \chi_D)}|^2=   \prod_{\substack{P\, \text{monic}\\ \text{irreducible}}} \left(1+\left(1+\frac{1}{|P|}\right)^{-1}\sum_{\ell=1}^\infty \frac{\tau_{\{\alpha,\overline{\alpha}\}}(P^{2\ell})}{|P|^\ell}\right)(1+o(1)).
 \end{align}
We can give a more precise expression for the Euler product above. Notice that
\begin{align*}
& \frac{1}{2}\left( \left(1-\frac{1}{|P|^{\frac{1}{2}+\alpha}}\right)^{-1}\left(1-\frac{1}{|P|^{\frac{1}{2}+\overline{\alpha}}}\right)^{-1}+\left(1+\frac{1}{|P|^{\frac{1}{2}+\alpha}}\right)^{-1}\left(1+\frac{1}{|P|^{\frac{1}{2}+\overline{\alpha}}}\right)^{-1}\right)\\
&=\frac{1}{2}\left( \sum_{j_1=0}^\infty \frac{1}{|P|^{j_1(\frac{1}{2}+\alpha)}} \sum_{j_2=0}^\infty \frac{1}{|P|^{j_2(\frac{1}{2}+\overline{\alpha})}}+\sum_{j_1=0}^\infty \frac{(-1)^{j_1}}{|P|^{j_1(\frac{1}{2}+\alpha)}} \sum_{j_2=0}^\infty \frac{(-1)^{j_2}}{|P|^{j_2(\frac{1}{2}+\overline{\alpha})}}\right) \\
&=1+\frac{1}{2}\left(\sum_{\ell=1}^\infty\frac{ \tau_{\{\alpha,\overline{\alpha}\}}(P^{\ell})}{|P|^\frac{\ell}{2}} +\sum_{\ell=1}^\infty\frac{(-1)^\ell \tau_{\{\alpha,\overline{\alpha}\}}(P^{\ell})}{|P|^\frac{\ell}{2}} \right).
\end{align*}

 This allows us to write 
   \[\frac{1}{\# \mathcal{H}_{2g+1}} \sum_{D \in \mathcal{H}_{2g+1}}  |{\textstyle L(\frac{1}{2}+\alpha, \chi_D)}|^2= C_\alpha(1+o(1)),\]
   where 
 \begin{align*}
 C_\alpha=&\prod_{\substack{P\, \text{monic}\\ \text{irreducible}}}
 \left[\frac{1}{2}\left( \left(1-\frac{1}{|P|^{\frac{1}{2}+\alpha}}\right)^{-1}\left(1-\frac{1}{|P|^{\frac{1}{2}+\overline{\alpha}}}\right)^{-1}+\left(1+\frac{1}{|P|^{\frac{1}{2}+\alpha}}\right)^{-1}\left(1+\frac{1}{|P|^{\frac{1}{2}+\overline{\alpha}}}\right)^{-1}\right)+\frac{1}{|P|}\right]\\ &\times \left(1+\frac{1}{|P|}\right)^{-1}.
 \end{align*}
 In other words, the average value of  $|{\textstyle L(\frac{1}{2}+\alpha, \chi_D)}|^2$ is given by $C_\alpha (1+o(1))$.

Finally, given $\varepsilon>0$, we can guarantee  that for $g$ large enough, there is a $D\in \mathcal{H}_{2g+1}$ such that 
\[|{\textstyle L(\frac{1}{2}+\alpha, \chi_D)}|\leq C_\alpha^{1/2}+\varepsilon.\]
Taking $\varepsilon$ arbitrarily small we can construct an infinite sequence of $D$'s satisfying this property, and leading to bounded $|\zeta_K(1/2+\alpha)|$. This implies that $S_{q,1/2+\alpha,B}$ is infinite. 

 \end{proof}

\subsection{The Northcott property at $\re(s)=1$.}

In this section we examine the behaviour at the boundary of the critical strip, namely $\re(s)=1$. We need a result along the lines of Theorem \ref{thm:BFK}. Since we have not found this in the literature, we start by computing the average of $|{\textstyle L(\frac{1}{2}+\alpha,\chi_D)}|^2$ where $\re(\alpha)\geq \frac{1}{2}$. When $\re(\alpha)>\frac{1}{2}$ this also gives an alternative better bound for Theorem \ref{thm:boundright}.

We assume that $q \equiv 1 \pmod{4}$. In particular, this implies that quadratic reciprocity is trivial and that $q>4$. In this section we will use  $\mathcal{M}$ to denote the monic polynomials in $\F_q[T]$, $\mathcal{M}_n$ to denote the monic polynomials of degree $n$, and $\mathcal{M}_{\leq n}$ to denote those of degree up to $n$. 

Our main result here is the following.
\begin{thm}\label{thm:averageline} Let $\re(\alpha)\geq \frac{1}{2}$. Then  for $\varepsilon>0$, \[\frac{1}{\# \mathcal{H}_{2g+1}} \sum_{D \in \mathcal{H}_{2g+1}}  |{\textstyle L(\frac{1}{2}+\alpha,\chi_D)}|^2=\prod_{\substack{P \text{monic}\\ \text{irreducible}}} \left(1+ \left( 1+\frac{1}{|P|}\right)^{-1}  \sum_{\ell=1}^\infty  \frac{\tau_{\{\alpha,\overline{\alpha}\}}(P^{2\ell})}{|P|^\ell} \right)+O(q^{(\varepsilon-\re(\alpha))g}+ 4^g q^{(\varepsilon-2\re(\alpha))g}),\]
where for a set $C$, $\tau_C$ is defined by \eqref{defn:tau}.
\end{thm}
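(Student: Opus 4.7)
The plan is to compute the second moment directly by expansion, exploiting that for $\re(\alpha)\ge \tfrac12$ we sit outside the critical strip and the ``swapped'' terms of the Bui--Florea--Keating expansion (Theorem~\ref{thm:BFK}) are absent. Since $\chi_D$ is a primitive quadratic character of conductor $D\in\mathcal{H}_{2g+1}$, its $L$-function is a polynomial in $q^{-s}$ of degree $2g$, so
\[
L(\tfrac12+\alpha,\chi_D)=\sum_{f\in\mathcal{M}_{\leq 2g}}\frac{\chi_D(f)}{|f|^{1/2+\alpha}},
\]
and, using that $\chi_D$ is real-valued, squaring yields a double sum in $(f_1,f_2)\in\mathcal{M}_{\leq 2g}^{2}$. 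Factoring $f_1 f_2 = a^2 b$ with $b$ squarefree, one has $\chi_D(f_1 f_2)=\chi_D(b)\mathbf{1}[(D,a)=1]$, and we partition into the \emph{diagonal} ($b=1$) and the \emph{off-diagonal} ($b\ne 1$).

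The diagonal is handled by the Möbius-type identity
\[
\frac{1}{\#\mathcal{H}_{2g+1}}\#\bigl\{D\in\mathcal{H}_{2g+1}:(D,f_1 f_2)=1\bigr\}=\prod_{P\mid f_1 f_2}(1+|P|^{-1})^{-1}+O\!\bigl(|f_1 f_2|\,q^{-2g}\bigr).
\]
Because $\re(\alpha)>0$, extending the truncated sum over pairs with $f_1 f_2$ a square to all monic pairs costs only a tail that is absorbed into the error. Multiplicativity then converts the result into an Euler product whose local factor at each prime $P$ is
\[
1+(1+|P|^{-1})^{-1}\sum_{\ell\ge 1}\frac{\tau_{\{\alpha,\overline\alpha\}}(P^{2\ell})}{|P|^\ell};
\]
the coprimality correction $(1+|P|^{-1})^{-1}$ attaches only to the $\ell\ge 1$ part because the $\ell=0$ term imposes no constraint between $D$ and $P$. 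This matches the asserted main term.

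For the off-diagonal, trivial quadratic reciprocity converts $\chi_D(b)$ into $\chi_b(D)$, and a further Möbius inversion removes the squarefree condition on $D$, reducing to sums $\sum_{D'\in\mathcal{M}_n}\chi_b(D')$ which vanish for $n\ge \deg b$ (since $L(s,\chi_b)$ is a polynomial of degree $\deg b-1$) and are controlled by Weil-type bounds otherwise. The estimate splits naturally into two regimes according to the size of $\deg(f_1 f_2)$: a \emph{bulk} regime in which $\deg(f_1 f_2)\ll g$, controlled by absolute convergence of $\sum_{f\in\mathcal{M}}|f|^{-1/2-\re(\alpha)}$ and contributing the first error term $q^{(\varepsilon-\re(\alpha))g}$; and an \emph{edge} regime in which $\deg f_1$ and $\deg f_2$ are both close to $2g$, where the ambiguity of factorizations $f_1 f_2=h$ of near-maximal degree contributes a divisor-function factor $4^g=2^{2g}$ which combines with the sharp size bound $|f_1|^{-\re(\alpha)}|f_2|^{-\re(\alpha)}\ll q^{-2g\,\re(\alpha)}$ to produce the second error term $4^g q^{(\varepsilon-2\re(\alpha))g}$.

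The main obstacle lies in the off-diagonal analysis: one must carefully balance the Weil cancellation in $\sum_{D'}\chi_b(D')$ against the divisor function in the edge regime, showing that the latter costs only $4^g$ rather than the naive $q^{O(g)}$, and verify that the standing hypothesis $q\equiv 1\pmod 4$ (which forces $q\ge 5>4$) makes $4^g q^{(\varepsilon-2\re(\alpha))g}=q^{(\log_q 4+\varepsilon-2\re(\alpha))g}$ actually $o(1)$ as $g\to\infty$ when $\re(\alpha)\ge \tfrac12$. Once these estimates are in hand, the Euler product expression for the main term follows directly from the diagonal computation.
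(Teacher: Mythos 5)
Your diagonal analysis is sound and recovers the correct main term, but the off-diagonal estimate has a genuine gap, and it originates in your decision to discard the approximate functional equation. Writing each factor as the exact polynomial $\sum_{f\in\mathcal{M}_{\le 2g}}\chi_D(f)|f|^{-1/2-\alpha}$ and squaring produces a Dirichlet sum in $f=f_1f_2$ of total degree up to $4g$, whereas the paper's Lemma~\ref{lem:approxfunceq} keeps the total degree at $2g$ (at the harmless cost of the ``swapped'' sum weighted by $q^{-4g\re(\alpha)}$, which is easily absorbed). This doubling is fatal for the non-square terms: after removing the square-free condition on $D$ and applying Lemma~\ref{lem:FR}, the incomplete character sum $\sum_{B\in\mathcal{M}_{2g+1-2\deg A}}\chi_f(B)$ is bounded by $\binom{\deg f-1}{2g+1-2\deg A}\,q^{(2g+1-2\deg A)/2}\le 2^{\deg f-1}q^{(2g+1)/2}$, and with $\deg f$ up to $4g$ the binomial factor is $2^{4g}=16^g$, not the $4^g$ you claim. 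The quantity you attribute to ``the ambiguity of factorizations'' is only the divisor function, which costs $q^{\varepsilon n}$; the $2^{\deg f}$ loss comes from the binomial coefficient in the P\'olya--Vinogradov bound, and you have not accounted for its doubling. Carrying out your edge regime honestly ($\deg f_1=\deg f_2=2g$: $q^{4g}$ pairs, weight $q^{-2g(1+2\re(\alpha))}$, normalized character sum $\ll 16^g q^{-g}$) gives a contribution $\ll 16^g q^{(1-4\re(\alpha))g}$, which at $\re(\alpha)=\tfrac12$ is $16^g q^{-g}$ --- divergent for $q\in\{5,9,13\}$ and in no case bounded by $4^g q^{(\varepsilon-2\re(\alpha))g}$. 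Since $\re(\alpha)=\tfrac12$ is exactly the boundary case the theorem must cover (it feeds Theorem~\ref{thm:BLKconsequence1} at $\re(s)=1$), the argument as proposed does not close; the approximate functional equation is essential here, not a dispensable convenience.

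A secondary, repairable slip: your coprimality count carries an error $O(|f_1f_2|\,q^{-2g})$. Summed against the weights in the diagonal at $\re(\alpha)=\tfrac12$ this contributes about $q^{2\varepsilon g}$, which is not even $O(1)$. The statement you actually need is the uniform $O(q^{-2g})$ of Lemma~\ref{lem:BFsquares}.
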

We remark that the main term in the above formula is the same as in equation \eqref{eq:sum2}, with a difference in the conditions, namely that we now have $\re(\alpha)\geq \frac{1}{2}$. Before proceeding to the proof of Theorem \ref{thm:averageline}, we consider some auxiliary results. 

\begin{lem} \label{lem:approxfunceq} \cite{Bui-Florea-Keating}*{Lemma 2.1, particular case} 
We have
 \[|{\textstyle L(\frac{1}{2}+\alpha,\chi_D)}|^2=\sum_{f \in \mathcal{M}_{\leq 2g}} \frac{\tau_{\{\alpha,\overline{\alpha}\}}(f) \chi_D(f)}{|f|^\frac{1}{2}}+q^{-4g\re(\alpha)}\sum_{f \in \mathcal{M}_{\leq 2g-1}} \frac{\tau_{\{-\alpha,-\overline{\alpha}\}}(f) \chi_D(f)}{|f|^\frac{1}{2}},\]
 where for a set $C$, $\tau_C$ is defined by \eqref{defn:tau}.
\end{lem}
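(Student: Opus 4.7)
The plan is to prove this as an exact identity (rather than an approximation) by exploiting the polynomial structure of $L(s,\chi_D)$ together with its functional equation. Since $D \in \mathcal{H}_{2g+1}$, the $L$-function $L(s,\chi_D)$ is a polynomial of degree $2g$ in $q^{-s}$, so writing $A_n = \sum_{f \in \mathcal{M}_n} \chi_D(f)$ we have the exact finite expansion $L(s,\chi_D) = \sum_{n=0}^{2g} A_n q^{-ns}$ (equivalently $A_n = 0$ for $n > 2g$). Because we are in the regime $q \equiv 1 \pmod 4$, the functional equation takes the clean form $L(s,\chi_D) = q^{(1-2s)g}L(1-s,\chi_D)$, and matching coefficients in $q^{-s}$ yields the symmetry
\begin{equation*}
A_n = q^{n-g} A_{2g-n}, \qquad 0 \leq n \leq 2g.
\end{equation*}

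Next, since $\chi_D$ is real-valued, $|L(\tfrac{1}{2}+\alpha,\chi_D)|^2 = L(\tfrac{1}{2}+\alpha,\chi_D)L(\tfrac{1}{2}+\overline{\alpha},\chi_D)$. I would expand this product via the polynomial representation to obtain the double sum
\begin{equation*}
|L(\tfrac{1}{2}+\alpha,\chi_D)|^2 = \sum_{n_1=0}^{2g}\sum_{n_2=0}^{2g} q^{-(n_1+n_2)/2 - n_1\alpha - n_2\overline{\alpha}} A_{n_1} A_{n_2},
\end{equation*}
and split it according to whether $n_1+n_2 \leq 2g$ or $n_1+n_2 > 2g$. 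On the first range, grouping by $f = f_1 f_2$ with $f_i \in \mathcal{M}_{n_i}$ and using the definition \eqref{defn:tau} of $\tau_{\{\alpha,\overline{\alpha}\}}$ recovers exactly the finite Dirichlet polynomial $\sum_{f \in \mathcal{M}_{\leq 2g}} \tau_{\{\alpha,\overline{\alpha}\}}(f)\chi_D(f)|f|^{-1/2}$, giving the first term of the claim.

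For the complementary range $n_1+n_2 > 2g$, I would apply the symmetry $A_{n_i} = q^{n_i - g} A_{2g-n_i}$ to both coefficients and perform the substitution $m_i := 2g - n_i$. Collecting the $q$-powers produces an overall factor $q^{-2g(\alpha+\overline{\alpha})} = q^{-4g\re(\alpha)}$, while the condition $n_1+n_2 > 2g$ becomes $m_1+m_2 < 2g$, i.e. $m_1+m_2 \leq 2g-1$. The remaining double sum has the same algebraic shape as the first one, but with $\alpha$ and $\overline{\alpha}$ replaced by $-\alpha$ and $-\overline{\alpha}$; regrouping by $f = f_1 f_2$ with $\deg(f_i) = m_i$ and invoking \eqref{defn:tau} identifies it with $\sum_{f \in \mathcal{M}_{\leq 2g-1}} \tau_{\{-\alpha,-\overline{\alpha}\}}(f)\chi_D(f)|f|^{-1/2}$, which is the second term of the claim.

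The only genuinely delicate point is the normalization of the functional equation: one must verify that for $D \in \mathcal{H}_{2g+1}$ under the standing hypothesis $q \equiv 1 \pmod 4$, the root number is $+1$ so that the clean form $A_n = q^{n-g}A_{2g-n}$ (rather than its negative) holds. This is standard and ultimately relies on the fact that quadratic reciprocity is trivial when $q \equiv 1 \pmod 4$, which is precisely why this congruence was imposed at the beginning of Section \ref{sec:critical}. Apart from this bookkeeping, the derivation is a purely algebraic manipulation of finite sums with no analytic input, reflecting the fact that in the function field setting the \textit{approximate} functional equation is in fact an exact identity.
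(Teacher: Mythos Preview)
Your proposal is correct. The paper's own proof is a one-line citation: it simply invokes \cite{Bui-Florea-Keating}*{Lemma 2.1} and specializes to $k=2$, $\alpha_1=\alpha$, $\alpha_2=\overline{\alpha}$. By contrast, you supply a self-contained derivation from first principles, using the polynomial form $L(s,\chi_D)=\sum_{n=0}^{2g}A_nq^{-ns}$, the coefficient symmetry $A_n=q^{n-g}A_{2g-n}$ coming from the functional equation, and the splitting at $n_1+n_2=2g$. This is exactly the standard mechanism behind the cited lemma, so the two approaches are not really different in substance; yours simply unpacks what the citation encapsulates. The advantage of your write-up is that it makes the paper self-contained at this point and highlights why the identity is \emph{exact} rather than approximate; the advantage of the paper's approach is brevity and that it records the general $k$-fold statement by reference. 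Your remark about the root number being $+1$ under $q\equiv 1\pmod 4$ is the right caveat, and is indeed the reason this hypothesis is in force throughout Section~\ref{sec:critical}.
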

\begin{proof}
 This follows from \cite{Bui-Florea-Keating}*{Lemma 2.1} by setting $k=2$ and $\alpha_1=\alpha$, $\alpha_2=\overline{\alpha}$.
\end{proof}

\begin{lem} \cite{Faifman-Rudnick}*{Lemma 2.1} \label{lem:FR}
Let $\chi_f$ be a non-trivial Dirichlet character modulo $f$. Then for $n<\deg(f)$,
\[\left|\sum_{B\in \mathcal{M}_n} \chi_f(B)\right| \leq \binom{\deg(f)-1}{n} q^\frac{n}{2}.\]
\end{lem}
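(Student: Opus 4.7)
The plan is to identify the sum $\sum_{B\in\mathcal{M}_n}\chi_f(B)$ as a coefficient of the Dirichlet $L$-function attached to $\chi_f$, and then to use the Riemann Hypothesis for function field $L$-functions to control that coefficient.

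First I would introduce the generating series
\[L(u,\chi_f)=\sum_{B\in\mathcal{M}} \chi_f(B)\,u^{\deg B}=\sum_{n\geq 0}\Bigl(\sum_{B\in\mathcal{M}_n}\chi_f(B)\Bigr)u^n.\]
By partitioning the monic polynomials of degree $n\geq\deg(f)$ into residue classes modulo $f$, the inner sum becomes $q^{n-\deg(f)}\sum_{R\bmod f}\chi_f(R)$, which vanishes because $\chi_f$ is non-trivial. Thus $L(u,\chi_f)$ is a polynomial in $u$ of degree at most $\deg(f)-1$, and for each $n<\deg(f)$ the quantity $\sum_{B\in\mathcal{M}_n}\chi_f(B)$ is precisely its coefficient of $u^n$.

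Second, I would factor
\[L(u,\chi_f)=\prod_{j=1}^{N}(1-\alpha_j u),\qquad N\leq \deg(f)-1,\]
so that
\[\sum_{B\in\mathcal{M}_n}\chi_f(B)=(-1)^n\,e_n(\alpha_1,\dots,\alpha_N),\]
where $e_n$ denotes the $n$-th elementary symmetric polynomial. The key analytic input is the Riemann Hypothesis for Dirichlet $L$-functions over $\F_q[T]$ (a consequence of Weil's theorem), combined with the standard decomposition of a non-primitive $L$-function as the $L$-function of the primitive character $\chi^*$ inducing $\chi_f$ times a finite product of local factors $(1-\chi^*(P)u^{\deg P})$. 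The primitive inverse zeros have modulus $\sqrt{q}$, while the extra local factors contribute inverse zeros of modulus $1$, so in all cases $|\alpha_j|\leq\sqrt{q}$.

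The proof concludes by applying the triangle inequality to
\[e_n(\alpha_1,\dots,\alpha_N)=\sum_{\substack{S\subseteq\{1,\dots,N\}\\ |S|=n}}\prod_{j\in S}\alpha_j,\]
which yields
\[\Bigl|\sum_{B\in\mathcal{M}_n}\chi_f(B)\Bigr|\leq\binom{N}{n}\,q^{n/2}\leq\binom{\deg(f)-1}{n}\,q^{n/2},\]
as desired. The only non-formal step is the invocation of the Riemann Hypothesis; this is the deep ingredient and the one genuine obstacle, but it is well documented (see for instance \cite{Rosen}) and everything else is routine manipulation of generating series.
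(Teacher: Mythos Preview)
Your argument is correct and is precisely the standard proof of this bound: express the character sum as a coefficient of the polynomial $L(u,\chi_f)$, factor over the inverse zeros, invoke Weil's Riemann Hypothesis (together with the imprimitive decomposition) to get $|\alpha_j|\leq\sqrt{q}$, and bound the elementary symmetric polynomial by $\binom{N}{n}q^{n/2}$.

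Note, however, that the present paper does not supply its own proof of this lemma; it is simply quoted from Faifman--Rudnick \cite{Faifman-Rudnick}*{Lemma 2.1}, whose argument is the same as yours. So there is nothing to compare: your write-up matches the cited source.
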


\begin{lem}\cite{Bui-Florea}*{Lemma 3.7}\label{lem:BFsquares} For $f \in \mathcal{M}$ we have 
\[\frac{1}{\# \mathcal{H}_{2g+1}} \sum_{D \in \mathcal{H}_{2g+1}} \chi_D(f^2) = \prod_{\substack{P \text{monic}\\ \text{irreducible}\\P\mid f}} \left(1+\frac{1}{|P|}\right)^{-1}+O(q^{-2g}).\]
\end{lem}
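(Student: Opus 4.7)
The plan is to insert the approximate functional equation from Lemma~\ref{lem:approxfunceq}, interchange summations, and evaluate the inner averages $\tfrac{1}{\#\mathcal{H}_{2g+1}}\sum_{D}\chi_D(f)$ by splitting the $f$-sum into perfect squares and non-squares. Squares will contribute the main Euler product, while non-squares will produce the two stated error terms.

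For the diagonal contribution of the first sum in Lemma~\ref{lem:approxfunceq}, I would restrict to $f=h^2$ with $h\in\mathcal{M}_{\leq g}$ and apply Lemma~\ref{lem:BFsquares} to obtain $\prod_{P\mid h}(1+|P|^{-1})^{-1}+O(q^{-2g})$. Multiplying by the multiplicative weight $\tau_{\{\alpha,\overline{\alpha}\}}(h^2)/|h|$ and extending the $h$-sum to all of $\mathcal{M}$, the resulting multiplicative sum factors as the displayed Euler product via the standard rearrangement $\prod_P(1+\sum_{\ell\ge 1}\cdots)$. The tail $\deg h > g$ together with the $O(q^{-2g})$ from Lemma~\ref{lem:BFsquares} contributes $O(q^{(\varepsilon-\re\alpha)g})$ after using the trivial bound $|\tau_{\{\alpha,\overline{\alpha}\}}(h^2)| \leq d(h^2)|h|^{-2\re\alpha} \ll_\varepsilon |h|^{\varepsilon-2\re\alpha}$ together with $\re\alpha\geq\tfrac{1}{2}$.

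For the off-diagonal (non-square $f$) contribution, the hypothesis $q\equiv 1\pmod 4$ trivializes quadratic reciprocity, so $\chi_D(f)=\chi_f(D)$ with $\chi_f$ a nontrivial Dirichlet character modulo $f$. I would open the squarefree indicator with $\mu^2(D)=\sum_{A^2\mid D}\mu(A)$, write $D=A^2E$, and reduce to the short character sums $\sum_{E\in\mathcal{M}_n}\chi_f(E)$. Lemma~\ref{lem:FR} bounds each by $\binom{\deg f-1}{n}q^{n/2}\leq 2^{\deg f}q^{n/2}$; summing over $A\in\mathcal{M}_{\leq g}$ and dividing by $\#\mathcal{H}_{2g+1}\asymp q^{2g+1}$ yields $\bigl|\tfrac{1}{\#\mathcal{H}_{2g+1}}\sum_D\chi_D(f)\bigr|\ll g\cdot 2^{\deg f}\,q^{-g-1/2}$. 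Weighting by $|\tau_{\{\alpha,\overline{\alpha}\}}(f)|/|f|^{1/2}\ll|f|^{\varepsilon-\re\alpha-1/2}$ and summing over non-square $f\in\mathcal{M}_{\leq 2g}$ yields the geometric series $\sum_n(2q^{1/2-\re\alpha+\varepsilon})^n$, whose top term produces the claimed $O(4^g q^{(\varepsilon-2\re\alpha)g})$.

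The second sum in Lemma~\ref{lem:approxfunceq}, carrying the prefactor $q^{-4g\re\alpha}$ and the conjugate weight $\tau_{\{-\alpha,-\overline{\alpha}\}}$ of size $\ll|f|^{\re\alpha+\varepsilon}$, is handled by the same square/non-square split. Its diagonal contribution is bounded by $q^{-4g\re\alpha}\cdot O(q^{(2\re\alpha+\varepsilon)g})=O(q^{(\varepsilon-2\re\alpha)g})$, while its off-diagonal is at most $O(4^g q^{(\varepsilon-2\re\alpha)g})$ after the same geometric summation; both fit comfortably inside the stated error. The main obstacle is precisely this off-diagonal bookkeeping: the Faifman--Rudnick binomial factor inevitably produces the $4^g$ growth, and one must confirm that the resulting $4^g q^{(\varepsilon-2\re\alpha)g}$ is indeed dominated by the Euler product main term. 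This requires both $\re\alpha\geq\tfrac{1}{2}$ and $q>4$, both guaranteed here by the assumption $q\equiv 1\pmod 4$ (so $q\geq 5$).
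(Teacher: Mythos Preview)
Your proposal is not a proof of the stated Lemma~\ref{lem:BFsquares} at all: what you have sketched is the argument for Theorem~\ref{thm:averageline} (the second moment of $L(\tfrac{1}{2}+\alpha,\chi_D)$), and in the course of it you explicitly \emph{invoke} Lemma~\ref{lem:BFsquares} to handle the diagonal. That is circular with respect to the task as stated.

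The lemma itself is a much more elementary counting statement and has nothing to do with the approximate functional equation, $\tau_{\{\alpha,\overline\alpha\}}$, or the Faifman--Rudnick bound. One simply observes that $\chi_D(f^2)=1$ when $(D,f)=1$ and $=0$ otherwise, so the average equals $\#\{D\in\mathcal{H}_{2g+1}:(D,f)=1\}/\#\mathcal{H}_{2g+1}$; writing the squarefree and coprimality conditions via M\"obius and summing gives the product $\prod_{P\mid f}(1+|P|^{-1})^{-1}$ with a $O(q^{-2g})$ remainder. In the paper this lemma is not proven but quoted from \cite{Bui-Florea}*{Lemma~3.7}. If you intended to prove Theorem~\ref{thm:averageline}, your outline does match the paper's proof of that theorem essentially step for step, but that is a different statement.
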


\begin{lem} For any $\varepsilon>0$,
\begin{equation}\label{eq:tau}
|\tau_{\{\alpha,\overline{\alpha}\}}(f)| \ll |f|^{\varepsilon-\re(\alpha)}\end{equation} and similarly 
\begin{equation}\label{eq:tau2}
|\tau_{\{-\alpha,-\overline{\alpha}\}}(f)| \ll |f|^{\varepsilon+\re(\alpha)}.
\end{equation}
\end{lem}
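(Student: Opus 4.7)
The plan is to reduce both bounds to the classical divisor bound over $\F_q[T]$. Unwinding the definition \eqref{defn:tau}, we have
\[\tau_{\{\alpha,\overline{\alpha}\}}(f) = \sum_{\substack{f=f_1f_2\\ f_i \text{ monic}}} \frac{1}{|f_1|^{\alpha}|f_2|^{\overline{\alpha}}}.\]
Applying the triangle inequality and using that $|\,\cdot\,|^{\alpha}$ has modulus $|\,\cdot\,|^{\re(\alpha)}$, we bound each summand by $|f_1|^{-\re(\alpha)}|f_2|^{-\re(\alpha)}$. Since $|f_1||f_2|=|f|$, this equals $|f|^{-\re(\alpha)}$, and so
\[|\tau_{\{\alpha,\overline{\alpha}\}}(f)| \leq \frac{d(f)}{|f|^{\re(\alpha)}},\]
where $d(f)$ denotes the number of monic divisors of $f$. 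The same argument, with signs reversed, gives $|\tau_{\{-\alpha,-\overline{\alpha}\}}(f)| \leq d(f)|f|^{\re(\alpha)}$.

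It remains to establish the divisor bound $d(f)\ll_\varepsilon |f|^\varepsilon$ for $f\in\F_q[T]$. This is the polynomial analogue of the classical bound $d(n)\ll_\varepsilon n^\varepsilon$, and it follows by the same multiplicative argument: $d$ is multiplicative, and for a prime power $P^k$ we have $d(P^k)=k+1$, so one verifies that $d(P^k)/|P^k|^\varepsilon = (k+1)/q^{k\varepsilon\deg(P)}$ is bounded by $1$ for all but finitely many pairs $(P,k)$ (those with $q^{\varepsilon\deg(P)}\leq 2$, say), giving a uniform constant. This is the only nontrivial point, and it is entirely standard.

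Combining the two steps yields
\[|\tau_{\{\alpha,\overline{\alpha}\}}(f)| \ll_\varepsilon |f|^{\varepsilon-\re(\alpha)} \qquad \text{and} \qquad |\tau_{\{-\alpha,-\overline{\alpha}\}}(f)| \ll_\varepsilon |f|^{\varepsilon+\re(\alpha)},\]
which are precisely \eqref{eq:tau} and \eqref{eq:tau2}. There is no substantive obstacle here; the only care needed is keeping track of the sign of $\re(\alpha)$ when passing the absolute value inside the sum.
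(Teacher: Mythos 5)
Your proof is correct and follows essentially the same route as the paper: triangle inequality on the sum over factorizations $f=f_1f_2$, noting each term has modulus $|f|^{-\re(\alpha)}$ (resp.\ $|f|^{\re(\alpha)}$), which yields the bound $d_2(f)\,|f|^{\mp\re(\alpha)}$, followed by the divisor bound $d_2(f)\ll_\varepsilon|f|^\varepsilon$. The only difference is that you spell out the standard multiplicative argument for the divisor bound over $\F_q[T]$, which the paper simply quotes.
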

\begin{proof}
 We have that 
 \[|\tau_{\{\alpha,\overline{\alpha}\}}(f)| =\left|\sum_{\substack{f=f_1f_2\\f_i \text{monic}}} \frac{1}{|f_1|^{\alpha}  |f_2|^{\overline{\alpha}}}\right|\leq \frac{d_2(f)}{|f|^{\re(\alpha)}},\]
 where $d_2$ is the divisor function. We can use that $d_2(f)=o(|f|^\varepsilon)$.
 The bound for $\tau_{\{-\alpha,-\overline{\alpha}\}}(f)$ is proven similarly.
\end{proof}

We are now ready to proceed with the proof of the main result of this section. 
\begin{proof}[Proof of Theorem \ref{thm:averageline}]
 By Lemma \ref{lem:approxfunceq}, we can split the sum under consideration in two Dirichlet sums of approximate length $2g$ as follows
 \begin{align*}
  \frac{1}{\# \mathcal{H}_{2g+1}} \sum_{D \in \mathcal{H}_{2g+1}}  |{\textstyle L(\frac{1}{2}+\alpha,\chi_D)}|^2= &  \frac{1}{\# \mathcal{H}_{2g+1}} \sum_{D \in \mathcal{H}_{2g+1}} \sum_{f \in \mathcal{M}_{\leq 2g}} \frac{\tau_{\{\alpha,\overline{\alpha}\}}(f) \chi_D(f)}{|f|^\frac{1}{2}}\\&+ \frac{q^{-2g}}{\# \mathcal{H}_{2g+1}} \sum_{D \in \mathcal{H}_{2g+1}}\sum_{f \in \mathcal{M}_{\leq 2g-1}} \frac{\tau_{\{-\alpha,-\overline{\alpha}\}}(f) \chi_D(f)}{|f|^\frac{1}{2}}\\
  =& S_{2g,\alpha}+S_{2g-1,-\alpha}.
 \end{align*}
We can further split each of the above sums into a sum where the character is evaluated in squares and a sum where it is not as follows 
\begin{align*}
  S_{2g,\alpha} = & \frac{1}{\# \mathcal{H}_{2g+1}} \sum_{D \in \mathcal{H}_{2g+1}} \sum_{\substack{f \in \mathcal{M}_{\leq 2g}\\f=\square}} \frac{\tau_{\{\alpha,\overline{\alpha}\}}(f) \chi_D(f)}{|f|^\frac{1}{2}}+
  \frac{1}{\# \mathcal{H}_{2g+1}} \sum_{D \in \mathcal{H}_{2g+1}} \sum_{\substack{f \in \mathcal{M}_{\leq 2g}\\f\not = \square}} \frac{\tau_{\{\alpha,\overline{\alpha}\}}(f) \chi_D(f)}{|f|^\frac{1}{2}}\\
  =&  S_{2g,\alpha}(\square) +  S_{2g,\alpha} (\not = \square),\\
  S_{2g-1,-\alpha}=& \frac{q^{-4\re(\alpha)g}}{\# \mathcal{H}_{2g+1}} \sum_{D \in \mathcal{H}_{2g+1}}\sum_{\substack{f \in \mathcal{M}_{\leq 2g-1}\\f=\square }} \frac{\tau_{\{-\alpha,-\overline{\alpha}\}}(f) \chi_D(f)}{|f|^\frac{1}{2}}+\frac{q^{-4\re(\alpha)g}}{\# \mathcal{H}_{2g+1}} \sum_{D \in \mathcal{H}_{2g+1}}\sum_{\substack{f \in \mathcal{M}_{\leq 2g-1}\\f \not = \square}} \frac{\tau_{\{-\alpha,-\overline{\alpha}\}}(f) \chi_D(f)}{|f|^\frac{1}{2}}\\
   =&  S_{2g-1,-\alpha}(\square) +  S_{2g-1,-\alpha} (\not = \square).
\end{align*}
The main term comes from $S_{2g,\alpha}(\square)$, while the other three terms are smaller. We do not need to estimate them for our purposes, so we will bound them to get an error term.

We start by focusing on the main term, coming from $S_{2g,\alpha}(\square)$. We apply Lemma \ref{lem:BFsquares} and obtain 
\begin{align*}
\frac{1}{\# \mathcal{H}_{2g+1}}\sum_{D \in \mathcal{H}_{2g+1}} \sum_{\substack{f \in \mathcal{M}_{\leq 2g}\\f=\square}} \frac{\tau_{\{\alpha,\overline{\alpha}\}}(f) \chi_D(f)}{|f|^\frac{1}{2}} = & \sum_{\substack{h \in \mathcal{M}_{\leq g}}} \frac{\tau_{\{\alpha,\overline{\alpha}\}}(h^2)}{|h|} \frac{1}{\# \mathcal{H}_{2g+1}}\sum_{D \in \mathcal{H}_{2g+1}}\chi_D(h^2)\\
= & \sum_{\substack{h \in \mathcal{M}_{\leq g}}} \frac{\tau_{\{\alpha,\overline{\alpha}\}}(h^2)}{|h|}  \left(\prod_{P\mid h} \left( 1+\frac{1}{|P|}\right)^{-1} + O(q^{-2g})\right).
\end{align*}
We remark that there are $q^n$ monic polynomials of degree $n$. Applying 
inequality \eqref{eq:tau}, we obtain
\[\sum_{\substack{h \in \mathcal{M}_{\leq g}}} \frac{|\tau_{\{\alpha,\overline{\alpha}\}}(h^2)|}{|h|} O (q^{-2g})\leq \sum_{n=1}^g q^{2n(\varepsilon-\re(\alpha))} O (q^{-2g})
\ll O(q^{(\varepsilon-2\re(\alpha)-2)g}).\]

 Now we consider the generating function
 \begin{align*}
  \mathcal{A}(u)=& \sum_{\substack{h \in \mathcal{M}}} \frac{\tau_{\{\alpha,\overline{\alpha}\}}(h^2)}{|h|}  \prod_{P\mid h} \left( 1+\frac{1}{|P|}\right)^{-1} u^{\deg(h)}
  = \prod_{\substack{P \text{monic}\\\text{irreducible}}} \left(1+ \left( 1+\frac{1}{|P|}\right)^{-1}  \sum_{\ell=1}^\infty  \frac{\tau_{\{\alpha,\overline{\alpha}\}}(P^{2\ell})}{|P|^\ell} u^{2\ell\deg(P)}\right).
 \end{align*}
Since $|\tau_{\{\alpha,\overline{\alpha}\}}(P^{2\ell})|\leq \frac{2\ell+1}{|P|^{2\ell\re(\alpha)}}$, it can be seen that $\mathcal{A}(u)$ converges for $|u|<q^{\re(\alpha)}$. 
 
 By Perron's formula, for $r<1$,
 \begin{align*}
  \sum_{\substack{h \in \mathcal{M}_{\leq g}}} \frac{\tau_{\{\alpha,\overline{\alpha}\}}(h^2)}{|h|}  \prod_{P\mid h} \left( 1+\frac{1}{|P|}\right)^{-1} =&\frac{1}{2\pi i} \oint_{|u|=r}  \frac{\mathcal{A}(u)}{u^g(1-u)}\frac{du}{u}.
 \end{align*}
We move the integral to the circle $|u|=q^{\re(\alpha)-\varepsilon}$ encountering the pole at $u=1$. This gives
\begin{align*}
   \sum_{\substack{h \in \mathcal{M}_{\leq g}}} \frac{\tau_{\{\alpha,\overline{\alpha}\}}(h^2)}{|h|}  \prod_{P\mid h} \left( 1+\frac{1}{|P|}\right)^{-1} =&-\Res_{u=1}  \frac{\mathcal{A}(u)}{u^{g+1}(1-u)}+O(q^{(\varepsilon-\re(\alpha))g})=\mathcal{A}(1)+O(q^{(\varepsilon-\re(\alpha))g}).
\end{align*}
Putting all of the above together, we finally write 
\begin{equation}\label{eq:squareg}
S_{2g,\alpha}(\square)=\prod_{\substack{P \text{monic}\\\text{irreducible}}} \left(1+ \left( 1+\frac{1}{|P|}\right)^{-1}  \sum_{\ell=1}^\infty  \frac{\tau_{\{\alpha,\overline{\alpha}\}}(P^{2\ell})}{|P|^\ell} \right)+O(q^{(\varepsilon-\re(\alpha))g}).
\end{equation}
 
Now we consider $S_{2g-1,-\alpha}(\square)$. Following similar steps as before and applying \eqref{eq:tau2},
\begin{align*}
\left|\frac{1}{\# \mathcal{H}_{2g+1}}\sum_{D \in \mathcal{H}_{2g+1}} \sum_{\substack{f \in \mathcal{M}_{\leq 2g-1}\\f=\square}} \frac{\tau_{\{-\alpha,-\overline{\alpha}\}}(f) \chi_D(f)}{|f|^\frac{1}{2}} \right|
= & \left|\sum_{\substack{h \in \mathcal{M}_{\leq g-1}}} \frac{\tau_{\{-\alpha,-\overline{\alpha}\}}(h^2)}{|h|}  \prod_{P\mid h} \left( 1+\frac{1}{|P|}\right)^{-1} \right|+ O(q^{-2g})\\
\ll  &\sum_{\substack{h \in \mathcal{M}_{\leq g-1}}}|h|^{2\re(\alpha)-1+\varepsilon} + O(q^{-2g})\\
\ll & q^{(2\re(\alpha)+\varepsilon)g}.
\end{align*}
Combining with the term $q^{-4\re(\alpha)g}$, this gives
\begin{equation}\label{eq:squareg1}
S_{2g-1,-\alpha}(\square) \ll q^{(\varepsilon-2\re(\alpha))g}.
\end{equation}

Our next step is to bound $S_{2g,\alpha}(\not = \square)$. We follow the proof of \cite{Andrade}*{Lemma 3}. 
\begin{align*}
\sum_{D \in \mathcal{H}_{2g+1}} \sum_{\substack{f \in \mathcal{M}_{\leq 2g}\\f\not = \square}} \frac{\tau_{\{\alpha,\overline{\alpha}\}}(f) \chi_D(f)}{|f|^\frac{1}{2}}
 = &\sum_{n=0}^{2g} q^{-\frac{n}{2}} \sum_{\substack{f\in \mathcal{M}_n\\f\not =\square}} \tau_{\{\alpha,\overline{\alpha}\}}(f)  \sum_{D \in \mathcal{H}_{2g+1}}  \chi_D(f)\\
= & \sum_{n=0}^{2g} q^{-\frac{n}{2}} \sum_{\substack{f\in \mathcal{M}_n\\f\not =\square}} \tau_{\{\alpha,\overline{\alpha}\}}(f)  \sum_{D\in \mathcal{M}_{2g+1}} \sum_{\substack{A \in \mathcal{M}\\ A^2\mid D}}\mu(A)\chi_D(f)\\ 
=& \sum_{n=0}^{2g} q^{-\frac{n}{2}} \sum_{\substack{f\in \mathcal{M}_n\\f\not =\square}} \tau_{\{\alpha,\overline{\alpha}\}}(f) \sum_{\substack{A \in \mathcal{M}_{\leq g}}}\mu(A) \sum_{B \in \mathcal{M}_{2g+1-2\deg(A)}}\chi_f(B),
\end{align*}
where in the last line we have applied quadratic reciprocity, which is trivial for $q\equiv 1 \pmod{4}$.

We can apply Lemma \ref{lem:FR} in the innermost sum when $2g+1-2\deg(A)<\deg(f)$. Note that the sum is zero otherwise, since it is a full character sum. Also applying \eqref{eq:tau}, we obtain 
\begin{align*}
\left|\sum_{D \in \mathcal{H}_{2g+1}} \sum_{\substack{f \in \mathcal{M}_{\leq 2g}\\f\not = \square}} \frac{\tau_{\{\alpha,\overline{\alpha}\}}(f) \chi_D(f)}{|f|^\frac{1}{2}}\right|\leq &  \sum_{n=0}^{2g} q^{-\frac{n}{2}} \sum_{\substack{f\in \mathcal{M}_n\\f\not =\square}} |\tau_{\{\alpha,\overline{\alpha}\}}(f)| \sum_{\substack{A \in \mathcal{M}_{\leq g}}} \binom{\deg(f)-1}{2g+1-2\deg(A)} q^\frac{2g+1-2\deg(A)}{2}\\
\ll & q^g \sum_{n=0}^{2g} q^{-\frac{n}{2}} \sum_{\substack{f\in \mathcal{M}_n}} |f|^{\varepsilon-\re(\alpha)}2^{\deg(f)-1}\\
\ll &  q^g \sum_{n=0}^{2g} 2^n q^{(\frac{1}{2}-\re(\alpha)+\varepsilon) n}\\
\ll & 2^{2g} q^{(2-2\re(\alpha)+\varepsilon)g}.
 \end{align*}
Combining with equation \eqref{eq:H}, we get 
\begin{equation}\label{eq:nsquareg}
S_{2g,\alpha}(\not = \square) \ll  2^{2g} q^{(\varepsilon-2\re(\alpha))g}.
\end{equation}

Finally we consider $S_{2g-1,-\alpha}(\not = \square)$. The computation is similar as before until we reach 
 \begin{align*}
\left|\sum_{D \in \mathcal{H}_{2g+1}} \sum_{\substack{f \in \mathcal{M}_{\leq 2g-1}\\f\not = \square}} \frac{\tau_{\{-\alpha,-\overline{\alpha}\}}(f) \chi_D(f)}{|f|^\frac{1}{2}}\right|
 \ll  &   q^g \sum_{n=0}^{2g-1} q^{-\frac{n}{2}} \sum_{\substack{f\in \mathcal{M}_n}} |\tau_{\{-\alpha,-\overline{\alpha}\}}(f)|2^{\deg(f)-1}\\
 \ll &  q^g \sum_{n=0}^{2g-1} q^{-\frac{n}{2}} \sum_{\substack{f\in \mathcal{M}_n}} 2^{\deg(f)} |f|^{\varepsilon+\re(\alpha)}\\
 \ll & 2^{2g} q^{(2+2\re(\alpha)+\varepsilon)g},
   \end{align*}
 where we have used equation \eqref{eq:tau2}.   
Again, combining with the extra factor $\frac{q^{-4\re(\alpha)g}}{\# \mathcal{H}_{2g+1}}$, this leads to 
\begin{equation}\label{eq:nsquareg1}
S_{2g-1,-\alpha}(\not = \square) \ll 2^{2g} q^{(\varepsilon-2\re(\alpha))g}.
\end{equation}
The result follows by combining equations \eqref{eq:squareg}, \eqref{eq:squareg1}, \eqref{eq:nsquareg} and \eqref{eq:nsquareg1}

\end{proof}

With Theorem \ref{thm:averageline} proven, we can now proceed to study the corresponding Northcott property. 

\begin{thm} \label{thm:BLKconsequence1}
 Let $\re(\alpha)\geq \frac{1}{2}$ with $\alpha\not =\frac{1}{2}$ and
\begin{align*}
B>&\left|\frac{1}{\left(1-q^{-\frac{1}{2}-\alpha}\right)\left(1-q^{\frac{1}{2}-\alpha}\right)}\right|\\&\times \prod_{\substack{P\, \text{monic}\\ \text{irreducible}}} \left[\frac{1}{2}\left( \left(1-\frac{1}{|P|^{\frac{1}{2}+\alpha}}\right)^{-1}\left(1-\frac{1}{|P|^{\frac{1}{2}+\overline{\alpha}}}\right)^{-1}+\left(1+\frac{1}{|P|^{\frac{1}{2}+\alpha}}\right)^{-1}\left(1+\frac{1}{|P|^{\frac{1}{2}+\overline{\alpha}}}\right)^{-1}\right)+\frac{1}{|P|}\right]^{1/2}\\
 &\times \left(1+\frac{1}{|P|}\right)^{-1/2}.
\end{align*}
Then $(q,1/2+\alpha,B)$ does not satisfy the Northcott property. 
 \end{thm}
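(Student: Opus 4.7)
The plan is to mirror the proof of Theorem \ref{thm:BLKconsequence}, substituting Theorem \ref{thm:averageline} for Theorem \ref{thm:BFK}. Fix $\alpha$ with $\re(\alpha)\geq \frac{1}{2}$ and $\alpha\neq\frac{1}{2}$, so that $s=\frac{1}{2}+\alpha$ is not the pole at $s=1$ of $\zeta_K(s)$, and therefore $\zeta_K^*(s)=\zeta_K(s)$. Writing $K=\F_q(T)(\sqrt{D})$ for $D\in\mathcal{H}_{2g+1}$, the factorisation \eqref{eq:quadratic} reduces the problem to bounding $|L(\tfrac{1}{2}+\alpha,\chi_D)|$ from above for infinitely many $D$.

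Apply Theorem \ref{thm:averageline} to compute the average of $|L(\tfrac{1}{2}+\alpha,\chi_D)|^2$ over $D\in\mathcal{H}_{2g+1}$. The main term is exactly the Euler product appearing in Theorem \ref{thm:BLKconsequence}, namely
\[
C_\alpha=\prod_{\substack{P\,\text{monic}\\\text{irreducible}}}\left[\frac{1}{2}\left(\left(1-\tfrac{1}{|P|^{\frac{1}{2}+\alpha}}\right)^{-1}\!\left(1-\tfrac{1}{|P|^{\frac{1}{2}+\overline{\alpha}}}\right)^{-1}\!+\left(1+\tfrac{1}{|P|^{\frac{1}{2}+\alpha}}\right)^{-1}\!\left(1+\tfrac{1}{|P|^{\frac{1}{2}+\overline{\alpha}}}\right)^{-1}\right)+\tfrac{1}{|P|}\right]\left(1+\tfrac{1}{|P|}\right)^{-1},
\]
a finite positive constant (convergence of the local factors is easily verified for $\re(\alpha)\geq\frac{1}{2}$). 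The error term is $O(q^{(\varepsilon-\re(\alpha))g}+4^{g}q^{(\varepsilon-2\re(\alpha))g})$; the first summand decays because $\re(\alpha)\geq\frac{1}{2}>0$, and the second decays provided $4\,q^{\varepsilon-2\re(\alpha)}<1$. Here is the one place requiring verification: since $q\equiv 1\pmod 4$ forces $q\geq 5$ and $2\re(\alpha)\geq 1$, one has $q^{2\re(\alpha)}\geq 5>4$, so choosing $\varepsilon>0$ small enough makes this second error term $o(1)$ as well.

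Hence $\frac{1}{\#\mathcal{H}_{2g+1}}\sum_{D\in\mathcal{H}_{2g+1}}|L(\tfrac{1}{2}+\alpha,\chi_D)|^{2}=C_{\alpha}(1+o(1))$. By the pigeonhole principle applied to the non-negative summands, for every $\varepsilon'>0$ and every sufficiently large $g$ there exists $D\in\mathcal{H}_{2g+1}$ with $|L(\tfrac{1}{2}+\alpha,\chi_D)|\leq C_{\alpha}^{1/2}+\varepsilon'$. Dividing by $|(1-q^{-\frac{1}{2}-\alpha})(1-q^{\frac{1}{2}-\alpha})|$ via \eqref{eq:quadratic} yields $|\zeta_K^*(\tfrac{1}{2}+\alpha)|\leq B$ whenever $B$ exceeds the threshold in the statement. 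Letting $g\to\infty$ produces infinitely many such $D$ of distinct degrees, and therefore infinitely many non-isomorphic function fields $K$ (they have different genera), in $S_{q,\frac{1}{2}+\alpha,B}$.

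The main technical obstacle has already been handled: it is the proof of Theorem \ref{thm:averageline}, in particular controlling the off-diagonal contribution $S_{2g,\alpha}(\neq\square)$ for $\re(\alpha)\geq\tfrac{1}{2}$, where the usual Bui–Florea–Keating estimates are no longer stated. Once that ingredient is in hand, the remainder of the argument is essentially a recapitulation of the proof of Theorem \ref{thm:BLKconsequence}, with the only subtlety being the check that the extra $4^{g}$ factor in the new error term is absorbed by $q^{-2\re(\alpha)g}$ thanks to the hypothesis $q\geq 5$ forced by $q\equiv 1\pmod 4$.
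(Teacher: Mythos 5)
Your proof is correct and takes essentially the same approach as the paper, whose own proof simply states that the argument follows the same lines as Theorem \ref{thm:BLKconsequence}, now with Theorem \ref{thm:averageline} supplying the second-moment asymptotic. Your explicit verification that the extra $4^{g}q^{(\varepsilon-2\re(\alpha))g}$ error term decays (using $q\geq 5$ from $q\equiv 1\pmod{4}$ together with $\re(\alpha)\geq\tfrac{1}{2}$) is precisely the point the paper leaves implicit.
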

\begin{proof}
The proof follows the same lines as the proof of Theorem \ref{thm:BLKconsequence}.
\end{proof}

\begin{rem}
 As a final remark, for $\re(\alpha)>\frac{1}{2}$, Theorem \ref{thm:BLKconsequence1} provides a better result than Theorem \ref{thm:boundright}. In the case of Theorem \ref{thm:boundright}, the bound is chosen to control all the quadratic $\zeta_K(s)$, while in the case of  Theorem \ref{thm:BLKconsequence1} the bound is chosen according to the average, and is, therefore, less resctricted.
\end{rem}

\begin{bibdiv}
	\begin{biblist}
\bib{Andrade}{article}{
   author={Andrade, Julio},
   title={A note on the mean value of $L$-functions in function fields},
   journal={Int. J. Number Theory},
   volume={8},
   date={2012},
   number={7},
   pages={1725--1740},
   issn={1793-0421},
}



	\bib{Andrade-Keating-conj}{article}{
   author={Andrade, J. C.},
   author={Keating, J. P.},
   title={Conjectures for the integral moments and ratios of $L$-functions
   over function fields},
   journal={J. Number Theory},
   volume={142},
   date={2014},
   pages={102--148},
   issn={0022-314X},
}

\bib{BL}{article}{
   author={Bruce, Juliette},
   author={Li, Wanlin},
   title={Effective bounds on the dimensions of Jacobians covering abelian
   varieties},
   journal={Proc. Amer. Math. Soc.},
   volume={148},
   date={2020},
   number={2},
   pages={535--551},
   issn={0002-9939},
}

\bib{Bui-Florea-Keating}{article}{
    author={Bui, Hung M.},
    author={Florea, Alexandra},
    author={Keating, Jonathan P.},
    title={The ratios conjecture and upper bounds for negative moments of $L$-functions over function fields},
    journal={arXiv:2109.1039
},
}


\bib{Bui-Florea}{article}{
   author={Bui, H. M.},
   author={Florea, Alexandra},
   title={Zeros of quadratic Dirichlet $L$-functions in the hyperelliptic
   ensemble},
   journal={Trans. Amer. Math. Soc.},
   volume={370},
   date={2018},
   number={11},
   pages={8013--8045},
   issn={0002-9947},
}

\bib{Couveignes}{article}{
      title={Short models of global fields}, 
      author={Jean-Marc Couveignes},
      journal={arXiv:2011.01759},
}

\bib{DeJongKatz}{misc}{author={de Jong, A. J.}, author={Katz, N. M.}, title={Counting the number of curves over a finite field},
}

\bib{Faifman-Rudnick}{article}{
   author={Faifman, Dmitry},
   author={Rudnick, Ze\'{e}v},
   title={Statistics of the zeros of zeta functions in families of
   hyperelliptic curves over a finite field},
   journal={Compos. Math.},
   volume={146},
   date={2010},
   number={1},
   pages={81--101},
   issn={0010-437X},
}

\bib{Gabber}{article}{
   author={Gabber, O.},
   title={On space filling curves and Albanese varieties},
   journal={Geom. Funct. Anal.},
   volume={11},
   date={2001},
   number={6},
   pages={1192--1200},
   issn={1016-443X},
}


	\bib{Li}{article}{
   author={Li, Wanlin},
   title={Vanishing of hyperelliptic L-functions at the central point},
   journal={J. Number Theory},
   volume={191},
   date={2018},
   pages={85--103},
   issn={0022-314X},
}

\bib{LT}{article}{
   author={Lipnowski, Michael},
   author={Tsimerman, Jacob},
   title={How large is $A_g(\mathbb{F}_q)$?},
   journal={Duke Math. J.},
   volume={167},
   date={2018},
   number={18},
   pages={3403--3453},
   issn={0012-7094},
}

\bib{Lumley}{article}{
   author={Lumley, Allysa},
   title={Complex moments and the distribution of values of $L(1,\chi_D)$
   over function fields with applications to class numbers},
   journal={Mathematika},
   volume={65},
   date={2019},
   number={2},
   pages={236--271},
   issn={0025-5793},
}

\bib{Lumley2}{article}{
author={Lumley, Allysa},
title={Moments and distribution of values for L-functions over function fields inside the critical strip},
   journal={Acta Arith.},
   volume={201},
   date={2021},
   number={4},
   pages={329--369},
}

\bib{Northcott}{article}{
    AUTHOR = {Northcott, D. G.},
     TITLE = {An inequality in the theory of arithmetic on algebraic
              varieties},
   JOURNAL = {Proc. Cambridge Philos. Soc.},
    VOLUME = {45},
      YEAR = {1949},
     PAGES = {502--509},
      ISSN = {0008-1981},
       URL = {https://doi.org/10.1017/s0305004100025202},
}

\bib{PP}{article}{
    author={Pazuki,Fabien},
    author={Pengo,Riccardo},
    title={On the Northcott property for special values of L-functions},
    journal={arXiv:2012.00542
},
}

\bib{Poonen}{article}{
   author={Poonen, Bjorn},
   title={Gonality of modular curves in characteristic $p$},
   journal={Math. Res. Lett.},
   volume={14},
   date={2007},
   number={4},
   pages={691--701},
   issn={1073-2780},
}

\bib{Rosen}{book}{
   author={Rosen, Michael},
   title={Number theory in function fields},
   series={Graduate Texts in Mathematics},
   volume={210},
   publisher={Springer-Verlag, New York},
   date={2002},
   pages={xii+358},
   isbn={0-387-95335-3},
}

	\end{biblist}
\end{bibdiv}

\end{document}